\documentclass[10pt, leqno]{article}

\usepackage[utf8x]{inputenc}
\usepackage{amsfonts}
\usepackage{amsmath}
\usepackage{amssymb}
\usepackage{amsthm}
\usepackage{mathrsfs}
\usepackage{latexsym}
\usepackage{graphicx}
\usepackage{bm}
\usepackage{inputenc}
\usepackage{enumerate}
\usepackage{enumitem}

\usepackage{hyperref}
\usepackage{graphicx}


\swapnumbers

\allowdisplaybreaks

\newtheorem{Lemma}{Lemma}[section]

\newtheorem{Theorem}[Lemma]{Theorem}
\newtheorem{Conjecture}[Lemma]{Conjecture}

\theoremstyle{definition}

\newtheorem{Definition}[Lemma]{Definition}

\theoremstyle{remark}
\newtheorem{Remark}[Lemma]{Remark}

\newtheoremstyle{proof*}
{3pt}
{3pt}
{\rmfamily}
{}
{\bfseries}
{.}
{.5em}
{\thmnote{#3}}
\theoremstyle{proof*}
\newtheorem*{proof*}{}

\DeclareMathOperator{\posBd}{\partial^{+}}
\DeclareMathOperator{\restrict}{\llcorner}

\DeclareMathOperator{\Clos}{Clos}  
\DeclareMathOperator{\Tan}{Tan}     

\DeclareMathOperator{\Nor}{Nor}
     
\DeclareMathOperator{\Der}{D}       
       
\DeclareMathOperator{\ap}{ap}

\DeclareMathOperator{\reach}{reach}

\DeclareMathOperator{\trace}{trace}

\DeclareMathOperator{\Dis}{Dis}

\DeclareMathOperator{\Sing}{Sing}
\DeclareMathOperator{\Reg}{Reg}
\DeclareMathOperator{\divergence}{div}
\DeclareMathOperator{\interior}{interior}

\newcommand{\Real}[1]{ \mathbf{R}^{#1}}
\newcommand{\Haus}[1]{ \mathscr{H}^{#1} }
\newcommand{\Leb}[1]{ \mathscr{L}^{#1} }
\newcommand{\rect}[1]{(\mathscr{H}^{#1},#1)}

\title{Uniqueness of singular convex hypersurfaces with lower bounded $k$-th mean curvature}
\author{Mario Santilli}

	\begin{document}
		\maketitle
		
		\begin{abstract}
		 We obtain a sharp characterization of the Euclidean ball among all convex bodies $ K $ whose boundary has a pointwise $ k $-th mean curvature not smaller than a geometric constant at almost all normal points. This geometric constant depends only on the volume and the boundary area of $ K $. We deduce this characterization from a new isoperimetric-type inequality for arbitrary convex bodies, for which the equality is achieved uniquely by balls. This second result is proved in a more general context of generalized mean-convex sets. Finally we positively answer a question left open in \cite{MR3951441} proving a further sharp characterization of the ball among all convex bodies that are of class $ \mathcal{C}^{1,1} $ outside a singular set, whose Hausdorff dimension is suitably bounded from above.
		\end{abstract}
		
		\section{Introduction}
		
		\subsection*{Background}
The search of geometric conditions to characterize the (geodesic) ball in terms of curvature properties has always been a very active and fascinating field of research in geometry. This problem has attracted the attention of many authors. Results of Liebmann (1899) and S\"uss (1929) allow to assert that the ball is the unique smooth convex hypersurface with constant $ k $-th mean curvature. On  the other hand, one strives in convex geometry to obtain similar characterizations independently of smoothness assumptions and consequently the notion of curvature must be carefully formulated. An insightful notion was found by Federer in \cite{MR0110078} with the concept of \emph{curvature measures}. They are Radon measures supported on the boundary of the convex body and they are a central tool in the development of the theory. For a convex body $ K $ in $ \mathbf{R}^{n+1} $ we denote them by $C_0(K, \cdot), \ldots , C_n(K, \cdot) $ and we recall that $ C_n(K, \cdot) $ is the boundary measure of $ K $. Moreover if $ K $ was smooth then $ C_k(K, \cdot)$ would be the measure obtained integrating on $ \partial K $ the $ (n-k) $-th order mean curvature. Many papers deal with the uniqueness and stability of balls in terms of their curvature measures. We refer to the excellent treatise of Schneider \cite[8.5]{MR3155183} for a complete picture of the several contributions.
One remarkable result is proved by Schneider in \cite{MR522031} and asserts that if $ C_k(K, \cdot) = \lambda C_n(K, \cdot) $ for some $ k = 1, \ldots , n-1 $ and $ \lambda \in \mathbf{R} $ then $ K $ is a ball. This result represents a far reaching generalization of the result of Liebmann-S\"uss. The hypothesis in Schneider's theorem implies in particular that the curvature measure is absolutely continuous with respect to the boundary measure and this latter condition alone implies a certain degree of regularity. Burago and Kalinin in \cite{MR1402287} proved that if $ C_k(K; \cdot) \leq \lambda C_n(K, \cdot) $ then the normal cone of $ K $ has dimension at most $n-k $ at each point. Bangert in \cite{MR1688541} proved that if $ C_{n-1}(K, \cdot) \leq \lambda C_n(K, \cdot) $ then $ \partial K $ is of class $ \mathcal{C}^{1,1} $. In \cite{MR1719698} it is proved that the absolute continuity of the $k$-th curvature measure implies the existence of balls of dimension $ n-k $ that roll freely in $ K $ and in \cite{MR1916372} the absolute continuity of the curvature measures of $ K $ is related to the absolute continuity of the surface area measure of the polar body of $ K $.

More recently, Maggi and Delgadino in \cite{MR3921314} have proved that a set $ E $ of finite perimeter in $ \mathbf{R}^{n+1} $ with constant distributional mean curvature (this is equivalent to say that the set is a critical point of the isoperimetric functional) is a union of finitely many balls of the same radius, thus obtaining a far reaching generalization of the famous Aleksandrov's characterization of the sphere in \cite{MR0102114}. We remark that this hypothesis on the distributional mean curvature guarantees by Allard's theorem \cite{MR0307015} that the boundary is a smooth hypersurface outside a singular set of $ \Haus{n} $-measure zero. The regularity almost-everywhere is only one of the ingredients of the proof, which uses at its core a sophisticated argument based on maximum principles. It is interesting to notice, see \cite{MR1209126}, that for a convex body $ K $ the boundary $ \partial K $ has constant distributional mean curvature equal to $ \lambda $ if and only if $ C_{n-1}(K, \cdot) = \lambda C_n(K, \cdot) $. Henceforth for convex bodies the result in \cite{MR3921314} follows from Schneider's theorem in \cite{MR522031}. 

Besides the concept of curvature measures, another notion of curvature can be introduced for a convex body $K$. This is the \emph{pointwise curvature} introduced at the \emph{normal boundary points}. It can be defined by locally representing the boundary of a convex body around each point as a graph of a convex function and using the well known result on the twice differentiability almost everywhere of Aleksandrov to conclude that a pointwise second fundamental form exists at almost every point of $ \partial K $, which is the pointwise second-order differential of the representing function at the base point. The resulting pointwise $ k $-th mean curvature $ H_k(K, \cdot) $ is the density of the absolutely continuous part of $ C_{n-k}(K, \cdot) $. However, in contrast with the full curvature measures and the distributional mean curvature, no regularity can be deduced from bounds on the pointwise curvature. In fact, it is well known (see for instance \cite[Theorem 1]{MR3272763} and references therein) that most of the convex bodies (in the sense of Baire Categories) have zero pointwise second fundamental form at almost every point and at the same time no regularity better than $ C^1 $ holds around  \emph{each} point of the boundary; actually, if one looks at the behaviour of the Gauss map then complicated singular geometries emerge. Since the pointwise curvature entails no regularity properties,   it is interesting to understand if uniqueness and rigidity results can be still deduced from geometric conditions on this curvature.  However we are not aware of results of this type and we obtain in this paper contributions in this direction.

	\subsection*{Results of the present paper}
	
	In this paper we aim to give a new characterization of the ball among all convex bodies, based on restrictions only on the pointwise curvature. At first sight a such characterization seems to be impossible, since one can glue together two proper antipodal spherical caps of the unit sphere in $ \mathbf{R}^{n+1} $ to construct a convex body $ K $ with a large singular set and still with the pointwise $ k $-th mean curvature equal to ${n \choose k}$, for every $ k = 1, \ldots , n $. This simple example gives a natural obstruction to the characterizations of balls in terms of conditions on the pointwise curvature. Looking closer at this example it is not difficult to realize (see \ref{spherical caps}) that the following inequality holds:
	\begin{equation*}
	\frac{\Haus{n}(\partial K)}{(n+1)\Leb{n+1}(K)} > 1.
	\end{equation*}
	This inequality naturally suggests the following conjecture: 
	\begin{Conjecture}
		Suppose $ K \subseteq \mathbf{R}^{n+1} $ is a convex body, $ k = 1, \ldots , n $ and the pointwise $ k $-th mean curvature $ H_k(K, \cdot) $ of $ K $ satisfies 
			\begin{equation*}
			H_k(K, x) \geq  \bigg(\frac{\Haus{n}(\partial K)}{(n+1)\Leb{n+1}(K)}\bigg)^{k}{n \choose k} \quad \textrm{for $ \Haus{n} $ a.e.\ $ x \in \partial K $}.
			\end{equation*}
			
			Then $ K $ is a ball. 
	\end{Conjecture}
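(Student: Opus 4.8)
The plan is to deduce the conjecture from a Heintze--Karcher-type inequality, combined with Maclaurin's inequality for the pointwise principal curvatures. Throughout write $A=\Haus{n}(\partial K)$, $V=\Leb{n+1}(K)$, $\lambda=A/((n+1)V)$, so that the hypothesis reads $H_k(K,x)\ge\lambda^{k}\binom{n}{k}$ for $\Haus{n}$-a.e.\ $x\in\partial K$; the constant is the sharp one, since a ball of radius $\rho$ satisfies $\lambda=\rho^{-1}$ and $H_k=\binom{n}{k}\rho^{-k}=\lambda^{k}\binom{n}{k}$.

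The core ingredient, to be proved separately and in greater generality, is a \emph{generalized Heintze--Karcher inequality}: if $X\subseteq\Real{n+1}$ is a convex body (more generally, a generalized mean-convex set) and $H_1(X,\cdot)=\kappa_1+\dots+\kappa_n$ is the pointwise first mean curvature, well-defined $\Haus{n}$-a.e.\ on $\partial X$ by Aleksandrov's theorem, then
\begin{equation}\label{eq:HK}
\int_{\partial X}\frac{d\Haus{n}(x)}{H_1(X,x)}\ \ge\ \frac{n+1}{n}\,\Leb{n+1}(X),
\end{equation}
with equality precisely when $X$ is a closed ball (the integrand being set to $+\infty$ where $H_1$ vanishes, so \eqref{eq:HK} carries content only when $H_1>0$ a.e.). The plan for \eqref{eq:HK} is to transplant the Montiel--Ros argument to the non-smooth setting: parametrize $\interior X$, up to an $\Leb{n+1}$-null set, by the inner normal map $F(x,t)=x-t\,\nu_X(x)$ over the pairs $(x,t)$ with $x$ a normal boundary point at which the second fundamental form exists and $0\le t\le(\max_i\kappa_i(x))^{-1}$ --- surjectivity here coming from the interior tangent ball that every nearest boundary point of an interior point carries. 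The approximate Jacobian of $F$ is $\prod_{i}(1-t\kappa_i(x))$, which on that range is nonnegative and, by the arithmetic--geometric mean inequality, at most $(1-tH_1(x)/n)^{n}$; since $H_1\le n\max_i\kappa_i$ one may enlarge the $t$-integration to $[0,n/H_1(x)]$, and the area formula gives
\[
V\ \le\ \int_{\partial X}\!\!\int_{0}^{n/H_1(x)}\!\!\Big(1-\tfrac{t\,H_1(x)}{n}\Big)^{\!n}dt\,d\Haus{n}(x)\ =\ \frac{n}{n+1}\int_{\partial X}\frac{d\Haus{n}(x)}{H_1(X,x)},
\]
which is \eqref{eq:HK}. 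For the equality case one backtracks: equality in AM--GM forces $\kappa_1(x)=\dots=\kappa_n(x)$ for a.e.\ $x$, equality in the area formula forces $F$ to be essentially injective onto $\interior X$, and together these must be upgraded to the statement that $X$ is a round ball.

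Granting \eqref{eq:HK}, the conjecture follows quickly. At $\Haus{n}$-a.e.\ $x\in\partial K$ the principal curvatures $\kappa_1(x),\dots,\kappa_n(x)\ge 0$ exist and $H_j(K,x)=\sigma_j(\kappa(x))$, so Maclaurin's inequality gives
\[
\frac{H_1(K,x)}{n}\ \ge\ \Big(\frac{H_k(K,x)}{\binom{n}{k}}\Big)^{\!1/k}\ \ge\ \lambda ,
\]
hence $H_1(K,x)\ge n\lambda>0$ and $H_1(K,x)^{-1}\le(n\lambda)^{-1}$ for a.e.\ $x\in\partial K$. Integrating,
\[
\int_{\partial K}\frac{d\Haus{n}(x)}{H_1(K,x)}\ \le\ \frac{A}{n\lambda}\ =\ \frac{(n+1)V}{n}.
\]
Combined with \eqref{eq:HK} for $X=K$, this forces equality in \eqref{eq:HK}, and its rigidity statement gives that $K$ is a ball.

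The routine part is this final reduction. The heart of the matter --- and the main obstacle --- is the generalized Heintze--Karcher inequality \eqref{eq:HK} with its equality characterization: the classical proof rests on the smoothness of the normal exponential map and a Jacobi-equation estimate for its Jacobian, whereas the boundary of a convex body is only $C^{1,1}$ at best and its curvature exists merely $\Haus{n}$-a.e., so one must instead work with the unit normal bundle / sets-of-positive-reach machinery, run a careful coarea argument for the only almost-everywhere differentiable inner normal map, and --- most delicately --- argue that being umbilic at $\Haus{n}$-almost every boundary point, together with the remaining equality conditions, forces $X$ to be a genuine round ball. Establishing \eqref{eq:HK} in the announced generality of generalized mean-convex sets is exactly the technical work flagged in the abstract.
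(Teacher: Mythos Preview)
Your reduction is exactly the paper's: Maclaurin gives $H_1/n\ge\lambda$, and combining this with the generalized Heintze--Karcher inequality (the paper's Theorem~\ref{inequality convex sets}) forces equality, whence $K$ is a ball. One point you gloss over: the rigidity clause in the paper's Heintze--Karcher theorem requires an a~priori \emph{upper} bound $H_1\le q$, not just equality in the integral; the paper handles this by first proving the pointwise equality $H_1(K,x)=n\lambda$ a.e.\ (via the contradiction argument in Theorem~\ref{final theorem}), which simultaneously supplies the needed bound $q=n\lambda$. Your integrated version yields the same thing once you observe that $\int 1/H_1 = A/(n\lambda)$ together with $1/H_1\le 1/(n\lambda)$ a.e.\ forces $H_1=n\lambda$ a.e.

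Your sketch of the equality case in \eqref{eq:HK} (``umbilic a.e.\ plus essential injectivity of $F$ must be upgraded to a ball'') is the natural heuristic, but the paper's actual mechanism is different and worth noting: rather than trying to pass directly from a.e.\ umbilicity of $\partial K$ to roundness, the paper uses the equality conditions to show that the local parallel volume $\rho\mapsto\Leb{n+1}\{0<\bm{\delta}_C\le\rho\}$ is a polynomial, invokes a theorem of Heveling--Hug--Last to deduce $\reach C\ge n/q$, and then shows that each parallel hypersurface $S(C,r)$ for $0<r<n/q$ is a $C^{1,1}$ umbilical hypersurface, hence a union of spheres by the $C^{1,1}$ version of the classical umbilic theorem (Theorem~\ref{umbilical surfaces}). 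This detour through positive reach and parallel surfaces is what makes the rigidity go through without any regularity assumption on $\partial K$.
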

We remark that this lower bound is sharp, as the aforementioned example of the two spherical caps shows. In Theorem \ref{final theorem} we provide a positive solution, which is based on the following new isoperimetric principle for \emph{arbitrary} convex bodies.

\begin{Theorem}\label{HK intro}
	If $ K \subseteq \Real{n+1} $ is a convex body then
	\begin{equation*}
	\Leb{n+1}(K)\leq \frac{n}{n+1}\int_{\partial K}\frac{1}{H_1(K,x)} \, d\Haus{n}x.
	\end{equation*}
	If the equality holds and there exists $ q > 0 $ such that $ H_1(K,x)  \leq q $ for $ \Haus{n}$ a.e.\ $ x \in \partial K $, then $ K $ is a ball.
\end{Theorem}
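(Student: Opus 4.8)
\emph{Proof proposal.}

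The inequality is the Heintze--Karcher inequality, now for an arbitrary convex body with the $\Haus{n}$-a.e.\ defined pointwise curvature, and the plan is to prove it by adapting the Montiel--Ros normal-ray argument to the merely Lipschitz boundary of $K$, using Aleksandrov's second-differentiability theorem and the area formula. Write $\nu_K$ for the outer unit normal of $\partial K$, defined $\Haus{n}$-a.e., and at each point $x$ where the Aleksandrov second fundamental form exists let $0 \le \kappa_1(x) \le \cdots \le \kappa_n(x)$ be its principal curvatures (nonnegative by convexity), so $H_1(K,x) = \kappa_1(x) + \cdots + \kappa_n(x)$. Define
\begin{equation*}
\rho(x) = \sup\{\, t \ge 0 : \dist{x - t\nu_K(x)}{\partial K} = t \,\},
\end{equation*}
the distance from $x$ to the inner cut locus along its normal ray; by convexity this supremum is attained, $[0,\rho(x)]$ is exactly the set of admissible $t$, and $\rho(x) > 0$ at $\Haus{n}$-a.e.\ $x$. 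The first point is the elementary bound $\rho(x) \le 1/\kappa_n(x)$ (with $1/0 = +\infty$): if $t > 1/\kappa_n(x)$, then on a $C^2$ piece of $\partial K$ through $x$ the Hessian of $z \mapsto |z - (x - t\nu_K(x))|^2$ at $x$ has the negative eigenvalue $2(1 - t\kappa_n(x))$ in the corresponding principal direction, so $x$ is not a local, hence not a global, nearest point. Since $\kappa_n(x) \ge H_1(K,x)/n$, this gives $\rho(x) \le n/H_1(K,x)$.

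Next I would set up the covering and the area formula. Decompose $\partial K$, up to an $\Haus{n}$-null set, into countably many $C^2$ hypersurface pieces $\Sigma_j$; on each $\Sigma_j \times \mathbf{R}$ the map $F(x,t) = x - t\nu_K(x)$ is Lipschitz with Jacobian $\prod_{i=1}^{n}(1 - t\kappa_i(x))$, which is $\ge 0$ for $0 \le t \le \rho(x)$. Using the concavity of $y \mapsto \dist{y}{\Real{n+1}\without K}$ on $K$ one sees that the set of $y \in \Int K$ with a non-unique nearest point in $\partial K$ is $\Leb{n+1}$-null, that the nearest-point projection pushes $\Leb{n+1}\restrict\Int K$ to a measure absolutely continuous with respect to $\Haus{n}\restrict\partial K$, and hence that for $\Leb{n+1}$-a.e.\ $y \in \Int K$ the unique nearest point $x$ lies on some $\Sigma_j$ with $y = F(x, d)$, $d = \dist{y}{\partial K} \le \rho(x)$. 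Thus $\Int K$ is covered, up to a null set, by $\bigcup_j F(\{(x,t) : x \in \Sigma_j,\ 0 \le t \le \rho(x)\})$, and the area formula applied on each $\Sigma_j$ and summed yields
\begin{equation*}
\Leb{n+1}(K) \le \int_{\partial K} \int_0^{\rho(x)} \prod_{i=1}^{n}\bigl(1 - t\kappa_i(x)\bigr)\, dt\, d\Haus{n}x.
\end{equation*}
For each $x$ with $H_1(K,x) > 0$, AM--GM together with $\rho(x) \le n/H_1(K,x)$ and the nonnegativity of $(1 - tH_1(K,x)/n)^n$ on $[0, n/H_1(K,x)]$ give
\begin{equation*}
\int_0^{\rho(x)} \prod_{i=1}^{n}(1 - t\kappa_i(x))\, dt \le \int_0^{n/H_1(K,x)} \Bigl(1 - \tfrac{t H_1(K,x)}{n}\Bigr)^{\! n} dt = \frac{n}{(n+1)\, H_1(K,x)},
\end{equation*}
the last equality by the substitution $s = tH_1(K,x)/n$; on $\{H_1(K,\cdot) = 0\}$ the integrand $1/H_1$ is $+\infty$ and the inequality is trivial. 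Integrating over $\partial K$ proves the inequality.

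For the equality case, equality forces equality $\Haus{n}$-a.e.\ in every step: (i) AM--GM equality, i.e.\ $\partial K$ is umbilic a.e., $\kappa_1(x) = \cdots = \kappa_n(x) =: \kappa(x) = H_1(K,x)/n$, with $\kappa(x) > 0$ a.e.\ (the right-hand side being finite); (ii) $\rho(x) = 1/\kappa(x)$ a.e.; (iii) the covering map $F$ is $\Haus{n+1}$-a.e.\ injective. From (ii), letting $t \uparrow \rho(x)$ in $\dist{x - t\nu_K(x)}{\partial K} = t$ gives $\dist{x - \kappa(x)^{-1}\nu_K(x)}{\partial K} = \kappa(x)^{-1}$, so the closed ball $B(x - \kappa(x)^{-1}\nu_K(x), \kappa(x)^{-1})$ lies in $K$ and is tangent to $\partial K$ at $x$; since $H_1(K,\cdot) \le q$ forces $\kappa(x)^{-1} \ge n/q$, the smaller internally tangent ball $B(x - (n/q)\nu_K(x), n/q)$ is also contained in $K$, for $\Haus{n}$-a.e.\ $x$. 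The set of $x \in \partial K$ lying on some closed ball of radius $n/q$ contained in $K$ is closed (pass to the limit on the centres) and of full $\Haus{n}$-measure; since any nonempty relatively open subset of the topological $n$-manifold $\partial K$ has positive $\Haus{n}$-measure, this set equals $\partial K$. Hence $K$ satisfies a uniform interior ball condition, so $\partial K$ is of class $\mathcal{C}^{1,1}$, $\nu_K$ is globally defined and Lipschitz, and its a.e.-derivative — the shape operator — equals $\kappa(x)\,\mathrm{Id}$ a.e.

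The remaining step, which I expect to be the main obstacle, is to upgrade this to ``$\partial K$ is a round sphere''. The difficulty is that a.e.-umbilicity of a $\mathcal{C}^{1,1}$ hypersurface is genuinely weaker than umbilicity everywhere, so one cannot directly invoke the classification of totally umbilic hypersurfaces. For $n \ge 2$ I would run a Schur-type argument: once $\mathcal{C}^{1,1}$ regularity is in hand the second fundamental form is controlled enough that the Codazzi relations, inserted with $h = \kappa\, g$ a.e., force $\kappa$ to be (a.e., hence genuinely) constant; then the a.e.-derivative of the Lipschitz Gauss map coincides with the continuous tensor $\kappa\, g$, so $\nu_K \in \mathcal{C}^1$, $\partial K \in \mathcal{C}^2$ is totally umbilic with constant curvature $\kappa$, and therefore a sphere of radius $1/\kappa$. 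For $n = 1$, where this is vacuous, I would instead use (ii)--(iii): every normal ray meets the medial axis exactly at the corresponding centre of curvature and the resulting segments tile $K$, which forces all normal lines through a common point, hence a circle. Throughout, the delicate points are measure-theoretic — the $C^2$-decomposition of $\partial K$, the absolute continuity of the nearest-point projection, and the validity of the area formula and of the Codazzi identity in low regularity — which I would handle with the Federer-type normal-bundle machinery for sets of positive reach.
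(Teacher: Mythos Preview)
Your argument is correct and shares its core with the paper --- the Montiel--Ros normal-ray covering plus AM--GM for the inequality, and a $\mathcal{C}^{1,1}$ totally-umbilic classification for equality --- but the packaging is genuinely different. The paper does \emph{not} work on $\partial K$ directly. It passes to the complement $C=\Real{n+1}\sim\interior(K)$, verifies via Lemma~\ref{inner Lusin condition} and Theorem~\ref{comparison of curvatures} that $C$ satisfies the abstract mean-convexity hypothesis~\eqref{HK general intro: eq2}, and then invokes the much more general Theorem~\ref{inequality} for arbitrary closed sets, whose proof runs the area-formula argument on the generalized unit normal bundle $N(C)$ using the machinery of \cite{MR4117503}. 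For the equality case the paper again stays in that framework: it shows the local tubular volume $\rho\mapsto\Leb{n+1}(\{0<\bm{\delta}_C\le\rho\})$ is a polynomial, applies the Heveling--Hug--Last criterion \cite{MR2036332} to obtain $\reach C\ge n/q$, and then proves that each level set $S(C,r)$ is $\mathcal{C}^{1,1}$ and umbilic, finishing with Theorem~\ref{umbilical surfaces}.

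Your route is more elementary for convex bodies and avoids both the normal-bundle calculus and the polynomial-volume/positive-reach detour: you get the interior ball of radius $n/q$ straight from $\rho(x)=1/\kappa(x)\ge n/q$, and convexity alone upgrades this to $\mathcal{C}^{1,1}$. What the paper's approach buys is generality --- Theorem~\ref{inequality} applies to all closed sets satisfying~\eqref{HK general intro: eq2}, not just complements of convex bodies --- at the cost of heavier infrastructure. The step you flag as the main obstacle, upgrading a.e.-umbilicity of a $\mathcal{C}^{1,1}$ hypersurface to a sphere, is exactly the paper's Theorem~\ref{umbilical surfaces}; its proof is a short local-coordinate computation (essentially your Codazzi idea written out as a system for the components of $(1+|\nabla f|^2)^{-1/2}\nabla f$) showing $\kappa$ is locally constant, so your Schur-type sketch is on target and can be made rigorous along those lines.
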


If $ \partial K $ is of class $ \mathcal{C}^2 $ then Theorem \ref{HK intro} is known and it is a special case of a result proved by Ros in \cite{MR996826} and Montiel-Ros in \cite{MR1173047} for smooth mean-convex sets. These authors used this result to prove the characterization of the sphere among smooth compact surfaces with constant $ k $-th mean curvature with $ k = 1, \ldots , n $ (the case with $ k = 1 $ is the Aleksandrov characterization of the sphere \cite{MR0102114}). Actually in this paper we are able to obtain a far-reaching generalization of the result of Ros and Montiel-Ros for a class of \emph{generalized mean-convex closed sets}, which includes all convex bodies as a special case and it is of independent interest. Theorem \ref{HK intro} is obtained as a Corollary of this more general result. In order to define the notion of mean convexity for arbitrary closed sets we employ the concept of generalized principal curvatures $ \overline{\kappa}_{A,1}, \ldots , \overline{\kappa}_{A,n} $ defined on the generalized unit normal bundle $N(A)$ of a closed subset $ A $ of $ \mathbf{R}^{n+1} $ and developed in the works of Stacho \cite{MR534512}, Hug-Last-Weil \cite{MR2031455} and \cite{MR4117503}. We refer to Section \ref{section2} for all the relevant definitions. We recall that the positive boundary of a closed set $ A $ is the subset $ \partial^+A $ of points $ a \in A $ such that there exists an open ball $ B $ with $ B \cap A = \varnothing $ and $ a \in \Clos(B) $, and that $ \partial^+A $ can be covered by countably many $ n $-dimensional submanifolds of class $ \mathcal{C}^2 $. Consequently  an approximate mean curvature vector $ h $ exists at $ \Haus{n} $ almost all points of $ \partial^+A $. We can now state our main theorem. 
\begin{Theorem}\label{HK general intro}
	Suppose $ C \subset \Real{n+1} $ is a closed set such that 
	\begin{equation}\label{HK general intro: eq2}
	\sum_{i=1}^n \overline{\kappa}_{C,i}(x,u) \leq 0  \quad \textrm{for $ \Haus{n} $ a.e.\ $ (x,u) \in N(C) $}
	\end{equation}
	and $ h $ is an approximate mean curvature vector  of $ \partial^+C $.
	
	Then  
	\begin{equation}\label{HK general intro: eq1}
	\Leb{n+1}(\Real{n+1} \sim C)\leq \frac{n}{n+1}\int_{\partial^{+}C}\frac{1}{|h|} \, d\Haus{n}.
	\end{equation}
	Furthermore, if there exists $ q > 0 $ such $| h(x)| \leq q $ for $ \Haus{n} $ a.e.\ $ x \in \partial^{+}C $ and 
	\begin{equation*}
	\Leb{n+1}(\Real{n+1} \sim C)= \frac{n}{n+1}\int_{\partial^{+}C}\frac{1}{|h|} \, d\Haus{n} < \infty,
	\end{equation*}
	then $ \Real{n+1} \sim C $ is a union of finitely many disjointed open balls of radius not smaller than $ n/q $.
\end{Theorem}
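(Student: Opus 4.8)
The plan is to run a Heintze–Karcher-type argument adapted to the positive boundary $\partial^+C$ and the generalized unit normal bundle $N(C)$, exploiting the decomposition of $\Real{n+1}\without C$ via the nearest-point projection onto $C$. First I would parametrize the exterior by the normal exponential map from $\partial^+C$: for $(x,u)\in N(C)$ and $t>0$, set $F(x,u,t)=x+tu$. Since every point of $\Real{n+1}\without C$ lies on a shortest segment to $C$ whose foot point is in $\partial^+C$, the map $F$ is surjective onto $\Real{n+1}\without C$ (up to an $\Leb{n+1}$-null set). The key computation is the coarea/area formula for $F$: its Jacobian along the fiber is governed by the generalized principal curvatures, so that $J F(x,u,t) = \prod_{i=1}^n\bigl(1 - t\,\overline{\kappa}_{C,i}(x,u)\bigr)$, interpreted suitably when some $\overline{\kappa}_{C,i}=+\infty$ (in which case the segment in direction $u$ degenerates immediately and contributes nothing). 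This is precisely the content of the area/coarea machinery for normal bundles of closed sets developed in \cite{MR2031455} and \cite{MR4117503}, which I would invoke.

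Next I would bound the fiberwise integral. For $(x,u)\in N(C)$ write $\kappa_i=\overline{\kappa}_{C,i}(x,u)$; the cut value $\tau(x,u)$ (the length of the maximal normal segment staying at distance $t$ from $C$) satisfies $\tau(x,u)\le 1/\max_i\kappa_i^+$ whenever some $\kappa_i>0$, by the standard focal-point obstruction. Using the AM–GM inequality together with the mean-convexity hypothesis \eqref{HK general intro: eq2}, namely $\sum_i\kappa_i\le 0$, one shows
\begin{equation*}
\int_0^{\tau(x,u)} \prod_{i=1}^n\bigl(1-t\kappa_i\bigr)\,dt \;\le\; \int_0^{1/|h(x)|}\Bigl(1-\tfrac{t}{n}\sum_{i=1}^n\kappa_i\Bigr)^{n}dt \;\le\; \frac{n}{n+1}\cdot\frac{1}{|h(x)|},
\end{equation*}
where in the last step one uses that for a.e.\ $x\in\partial^+C$ there is a normal direction realizing the approximate mean curvature vector $h(x)$, so $\sum_i\kappa_i \le n|h(x)|\cdot(-1)$ cannot hold directly — rather, one must be careful: the sign convention is that $h$ points away from $C$, $-\sum_i\kappa_i/n \ge$ the relevant mean curvature, and the correct elementary inequality is $\int_0^{1/m}(1-ms/n)^n ds = \tfrac{n}{m(n+1)}$ with $m=|h(x)|>0$; when $|h(x)|=0$ the right-hand side of \eqref{HK general intro: eq1} is $+\infty$ and there is nothing to prove. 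Integrating this fiberwise estimate over $N(C)$ against $\Haus{n}$ and projecting to $\partial^+C$ (each positive-boundary point carrying at most one outward normal up to $\Haus{n}$-null sets) yields $\Leb{n+1}(\Real{n+1}\without C)\le \tfrac{n}{n+1}\int_{\partial^+C}|h|^{-1}\,d\Haus{n}$, which is \eqref{HK general intro: eq1}.

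For the rigidity statement, assume $|h|\le q$ a.e.\ and equality holds with finite right-hand side. Equality forces, for $\Haus{n}$-a.e.\ $(x,u)\in N(C)$: (i) all generalized principal curvatures coincide, $\overline{\kappa}_{C,1}(x,u)=\cdots=\overline{\kappa}_{C,n}(x,u)=:\kappa(x)=|h(x)|$ (AM–GM equality plus the mean-convexity saturating), and (ii) the normal segment extends all the way to the focal radius $1/\kappa(x)$, i.e.\ the exterior is exactly the union of the open balls $B(x+u/\kappa(x),\,1/\kappa(x))$. Since $|h|\le q$, each such ball has radius $1/\kappa(x)\ge 1/q \ge n/q$ — here one must track the normalization; with the paper's convention $H_1 = \tfrac1n\sum\kappa_i$ the umbilic radius is $n/H_1 = n/|h|\ge n/q$. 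A connectedness and "rolling ball" argument then shows these balls are locally constant in $x$, hence finitely many (finiteness of $\Leb{n+1}(\Real{n+1}\without C)$ bounds their number), pairwise disjoint, and $\Real{n+1}\without C$ is their disjoint union. I expect the \textbf{main obstacle} to be the rigorous handling of the area formula for $F$ on $N(C)$ when some generalized curvatures are infinite and the careful justification that $F$ is essentially injective and surjective onto the complement — i.e.\ the measure-theoretic bookkeeping of the nearest-point projection for a general closed set — together with pinning down the exact curvature normalization so that the constant $n/q$ (rather than $1/q$) appears; the analytic inequality itself is elementary once the parametrization is in place.
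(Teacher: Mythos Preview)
Your approach to the inequality \eqref{HK general intro: eq1} is essentially the paper's: parametrize $\Real{n+1}\sim C$ by $(z,\eta,t)\mapsto z+t\eta$ from $N(C)\times(0,\infty)$, compute the Jacobian via \cite{MR4117503}, and bound the fiber integral by AM--GM. Two points deserve more care than you give them. First, the Jacobian on $N(C)$ carries the factor $\prod_i(1+\overline{\kappa}_{C,i}^2)^{-1/2}$, which is absorbed only when you pass from $N(C)$ to $\partial^+C$ via the coarea formula \cite[5.4]{MR4117503}; before that step is available you must know that $\Haus{n}$-a.e.\ $(x,u)\in N(C)$ lies over $C^{(n)}$ and that the Lusin condition $\Haus{n}(N(C)|S)=0$ whenever $\Haus{n}(S)=0$ holds. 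Both follow from the hypothesis (which forces $\overline{\kappa}_{C,n}<\infty$ a.e.) through \cite[5.3, 5.6]{MR4117503}, not from a bare assertion that ``each positive-boundary point carries at most one outward normal.'' Second, the points of $C^{(n)}$ with \emph{two} antipodal normals must be treated separately: there the two copies of \eqref{inequality: eq6} combine to give $h=0$, so either the right-hand side is $+\infty$ or that set is $\Haus{n}$-null.

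Your rigidity argument, however, has a genuine gap. From equality you correctly extract that $\overline{\kappa}_{C,j}(z,\eta(z))=-|h(z)|/n$ for $\Haus{n}$-a.e.\ $z\in\partial^+C$ and that the normal segment reaches length $n/|h(z)|$. But this is information only at $\Haus{n}$-a.e.\ point of a set that is merely countably $\rect{n}$ rectifiable of class~$2$; there is no topology along which a ``connectedness and rolling ball'' argument can run, and you have not shown that the candidate ball $\mathbf{U}\bigl(z+\tfrac{n}{|h(z)|}\eta(z),\,\tfrac{n}{|h(z)|}\bigr)$ actually lies in $\Real{n+1}\sim C$, let alone that its center is locally constant. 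The paper supplies the missing idea: feed the equality back into the area/coarea computation to show that $\rho\mapsto \int_{\{0<\bm{\delta}_C\le\rho\}} f\circ\bm{\psi}_C\,d\Leb{n+1}$ is a \emph{polynomial} in $\rho$ on $(0,n/q)$, and then invoke the theorem of Heveling--Hug--Last \cite{MR2036332} to conclude $\reach C\ge n/q$. Only once positive reach is in hand are the level sets $S(C,r)$ genuine closed $\mathcal{C}^{1,1}$ hypersurfaces; the umbilicity transfers to them via \cite[4.10]{MR4117503}, and Theorem~\ref{umbilical surfaces} then forces each $S(C,r)$ to be a finite union of round spheres, from which the conclusion for $\partial C$ follows by projecting along $\bm{\xi}_C$. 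Without this detour through polynomial parallel volume and positive reach, your argument does not close.
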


In \cite[Theorem 8]{MR3921314} a weak version of the inequality \eqref{HK general intro: eq1} is proved using different methods. It asserts that if $ \mathbf{R}^{n+1} \sim C $ is a set of finite perimeter which is \emph{viscosity mean convex}, then 
\begin{equation}\label{HK weak intro}
	\Leb{n+1}(\mathbf{R}^{n+1} \sim C) \leq \lim_{s \to 0}\int_{S(C,s)} \frac{1}{H_s}\, d\Haus{n}
\end{equation}
where $ S(C,s) $ is the set of points at distance $ s $ from $ C $ and $ H_s $ is the approximate mean curvature of $ S(C,s) $ (which can be defined $ \Haus{n}$ almost everywhere since $ S(C,s) $ is second-order rectifiable). The notion of viscosity mean convexity is essentially equivalent to our condition \eqref{HK general intro: eq2}. Theorem \ref{HK general intro}  improves and extends \cite[Theorem 8]{MR3921314} in the following three fundamental aspects.
\begin{itemize}
	\item We obtain a characterization of the right hand side in terms of the approximate mean curvature of $ \partial^+ C $. This involves a careful analysis of the interactions between the curvatures of $ \partial^+C $ and the curvatures of $ S(C,s) $, which is a non-trivial issue in a singular setting.
	\item We obtain the characterization of the equality case. This is the key to obtain our characterization of the sphere in Theorem \ref{final theorem}.
	\item The proof of Theorem \ref{HK general intro} is based on computing certain integral-geometric equalities and inequalities on the unit normal bundle of $ C $, while the proof of \cite{MR3921314} is based on the study of the monotonicity properties of $\int_{S(C,s)} \frac{1}{H_s}\, d\Haus{n} $ with respect to $ s \in (0, \infty) $. Integral formulas play a key role in the analysis of the equality case. In fact we use them both to prove that the volume of the tubular neighbourhood of radius $ r $ of $ C $ is a polynomial in $ r $ and to obtain crucial informations on the curvature of the level sets $ S(C,s) $. Combining all these informations we can eventually conclude that $ C $ is a set of positive reach employing a result of Heveling-Hug-Last in \cite{MR2036332}, and that $S(C,s)$ is an umbilical $ \mathcal{C}^{1,1} $-hypersurface for all $ s $ smaller than the reach of $ C $. Therefore $S(C,s) $ must be a sphere and we get the conclusion.
\end{itemize}

We finally remark that in the recent work \cite[Theorem 1]{MR3951441} the ball is characterized to be the unique convex body $ K $ in $ \mathbf{R}^{n+1} $ such that its boundary is of class $ \mathcal{C}^2 $ outside a finite number of singular points and has constant $ k $-th mean curvature for some $ k = 1, \ldots , n-1 $ on its regular part (notice here we exclude $ k = n $). The authors leave as an open problem, see \cite[Remark 8]{MR3951441} to prove that the ball is the unique convex body whose boundary is of class $ \mathcal{C}^2 $ with constant $ k $-th mean curvature outside a singular set of vanishing $ \mathcal{H}^s $ measure with $ 1 \leq k \leq n-s $. We provide a positive solution of this conjecture in \ref{another characterization of the sphere}.

\paragraph{Additional note.} It is natural to ask if the general Theorem \ref{HK general intro} can be applied to other classes of mean-convex sets, besides the application to convex bodies considered here. In this regard it might be interesting to observe that the class of generalized mean-convex closed sets defined by condition \eqref{HK general intro: eq2} in Theorem \ref{HK general intro} includes the complementary of each time slice of a mean-convex level set flow and the complementary of each open set with finite perimeter and bounded distributional mean curvature. One may check these assertions employing the results in \cite{MR3466806} and \cite{MR4095952}. Moreover the method of proof of Theorem \ref{HK general intro} has already been used to obtain the uniqueness result for critical points of the anisotropic isoperimetric problem in \cite{rosa2019uniqueness}. 

		\section{Preliminaries}\label{section2}
		
		\subsection*{Notation and basic concepts}
		
		As a general rule, the notation and the terminology used without comments agree with \cite[pp.\ 669--676]{MR0257325}. The symbol $\sim$ denotes the difference between two sets. The symbols $ \mathbf{U}(a,r) $ and $ \mathbf{B}(a,r) $ denote the open and closed ball with centre $ a $ and radius $ r $ (\cite[2.8.1]{MR0257325}); $ \mathbf{S}^{m} $ is the $ m $ dimensional unit sphere in $ \Real{m+1} $ (\cite[3.2.13]{MR0257325}); $ \Leb{m} $ and $ \Haus{m} $ are the $ m $ dimensional Lebesgue and Hausdorff measure (\cite[2.10.2]{MR0257325}); $ \mathbf{G}(m,k) $ is the Grassmann manifold of all $ k $ dimensional subspaces in $ \Real{m} $ (\cite[1.6.2]{MR0257325}). The symbol $ \bullet $ denotes the standard inner product of $\Real{n}$. If $T$ is a linear subspace of $\Real{n}$ then $T^{\perp} = \Real{n} \cap \{ v : v \bullet u =0 \; \textrm{for $u \in T$} \}$ and $ T_\natural : \mathbf{R}^n \rightarrow \mathbf{R}^n $ is the orthogonal projection onto $ T $.
		 The boundary and the closure of a subset $ A $ of a normed vector space $ X $ are denoted by $ \partial A $ and $ \Clos A$. The symbols $ \Tan(A,a) $ and $ \Nor(A,a) $ denote the tangent and the normal cone of $ A $ at $ a $ (\cite[3.1.21]{MR0257325}) and \emph{the normal bundle of $A$} is defined as
		 \begin{equation*}
		 \Nor(A)= \{ (a,u) : a \in A, \; u \in \Nor(A,a) \}.
		 \end{equation*}
The $m$-dimensional approximate tangent cone of a measure $ \phi $ at a point $ a $ is denoted by $ \Tan^m(\phi, a)	$ (\cite[3.2.16]{MR0257325}). If $X$ and $Y$ are sets and $ Z \subseteq X \times Y $ we define
		\begin{equation*}
		Z | S = Z \cap \{ (x,y) : x \in S  \} \quad \textrm{for $ S \subseteq X $.}
	\end{equation*}
	Our terminology for rectifiable sets agrees with \cite[3.2.14]{MR0257325}. If $ M $ is a submanifold of class $ 2 $ then $ Q_M(x) $ is \emph{the second fundamental form of $ M $ at $ x $}; this is the unique symmetric bilinear form 
	\begin{equation*}
	Q_M(x) : \Tan(M,x) \times \Tan(M,x) \rightarrow \Nor(M,x)
	\end{equation*}
	such that $ Q_M(x)(u,v) \bullet \nu(x) = - \Der \nu(x)(u) \bullet v $ whenever $ u,v \in \Tan(M,x) $ and $ \nu $ is a normal vector field of class $ 1 $ defined in a neighborhood of $ x $.
		
		\subsection*{Generalized curvatures of arbitrary closed sets}
	
	The fundamental notion of curvature measures is introduced for an arbitrary convex body throughout the Steiner Formula, see \cite[4.2]{MR3155183}. It is often very useful to have explicit formulas of the curvature measures of a convex body in terms of the generalized principal curvatures defined on its unit normal bundle, see \cite[2.6]{MR3155183}. The construction of the generalized curvatures can be carried over arbitrary closed sets, see \cite{MR534512}, \cite{MR2031455} and \cite{MR4117503}. Here we explain a such construction.

		Suppose $A \subseteq \Real{n+1}$ is closed. The \emph{distance function to $ A $} is denoted by $\bm{\delta}_{A} $ and $ S(A,r)= \{x : \bm{\delta}_{A}(x)= r\} $. If $U$ is the set of all $x \in \Real{n+1}$ such that there exists a unique $a \in A$ with $|x-a| = \bm{\delta}_{A}(x)$, we define the \textit{nearest point projection onto~$A$} as the map $\bm{\xi}_{A} : U \rightarrow A $ characterised by the requirement
		\begin{equation*}
		| x- \bm{\xi}_{A}(x)| = \bm{\delta}_{A}(x) \quad \textrm{for $x \in U$}.
		\end{equation*}
We define $ U(A) = U \cap (\Real{n+1} \sim A) $. We recall from \cite[4.1]{MR0110078} that the reach of $ A $ is defined as
	\begin{equation*}
		\reach A = \inf\{\sup\{r : \mathbf{U}(a,r) \subseteq A \cup U(A) \} : a \in A   \}.
	\end{equation*}
	The functions $ \bm{\nu}_{A} $ and $ \bm{\psi}_{A} $ are defined by
		\begin{equation*}
		\bm{\nu}_{A}(z) = \bm{\delta}_{A}(z)^{-1}(z -  \bm{\xi}_{A}(z)) \quad \textrm{and} \quad \bm{\psi}_{A}(z)= (\bm{\xi}_{A}(z), \bm{\nu}_{A}(z)),
		\end{equation*}
		whenever $ z \in U(A)$. 
		
		We define the upper semicontinuous function $ \rho(A, \cdot) $ by
		\begin{equation*}
		\rho(A,x) = \sup \{t : \bm{\delta}_{A}(\bm{\xi}_{A}(x) + t (x-\bm{\xi}_{A}(x) ))=t \bm{\delta}_{A}(x)  \} \quad \textrm{for $ x \in U(A) $,}
		\end{equation*}
		and we say that $ x \in U(A) $ is a \emph{regular point of $ \bm{\xi}_{A} $} if and only if $ \bm{\xi}_{A}$ is approximately differentiable at $ x $ with symmetric approximate differential and $ \ap \lim_{y \to x} \rho(A,y) = \rho(A,x)>1$; cf.\ \cite[3.6, 3.7, 3.13]{MR4117503}. The set of regular points of $ \bm{\xi}_{A} $ is denoted by $ R(A)$. It is proved in \cite[3.14]{MR4117503} that $ \Leb{n+1}(\Real{n+1} \sim (A \cup R(A))) =0 $ and $ \bm{\xi}_{A}(x) + t(x - \bm{\xi}_{A}(x)) \in R(A) $ for every $ x \in R(A) $ and for every $ 0 < t < \rho(A,x) $. 
	
		We define \emph{the generalized unit normal bundle of $ A $} as
		\begin{equation*}
		N(A) = (A \times \mathbf{S}^{n}) \cap \{ (a,u) :  \bm{\delta}_{A}(a+su)=s \; \textrm{for some $ s > 0 $}\}, 
		\end{equation*}
		with $ N(A,a) = \{ v : (a,v) \in N(A)   \} $ for $ a \in A $. The \emph{positive boundary of $ A $} is defined by
		\begin{equation*}
		\partial^{+}A =A \cap  \{a : N(A,a) \neq \varnothing   \}.
		\end{equation*}
	The set $ N(A) $ is a countably $n$ rectifiable subset of $ \Nor(A) $ (cf.\ \cite[4.2, 4.3]{MR4117503}); however it may not have finite $ \Haus{n} $ measure inside all compact sets. The positive boundary $ \partial^+ A $ instead is countably $ \rect{n} $ rectifiable of class $ 2 $, see \cite[4.12]{MR4012808}.
		
		Next, we define
		\begin{equation*}
			R(N(A)) = \bm{\psi}_{A}[R(A)].
		\end{equation*}
		One may check (cf.\ \cite[4.5]{MR4117503}) that $ \Haus{n}(N(A) \sim R(N(A)) =0 $. If $ (a,u) \in R(N(A)) $, $ x \in R(A) $ and $ \bm{\psi}_{A}(x)= (a,u) $ we introduce (cf.\ \cite[4.7]{MR4117503})
		\begin{equation*}
		T_{A}(a,u) = \ap \Der \bm{\xi}_{A}(x)[\Real{n+1}] \in \bigcup_{m =0}^{n+1}\mathbf{G}(n+1, m)
		\end{equation*}
		and we define the symmetric bilinear form $ \overline{Q}(a,u): T_{A}(a,u) \times T_{A}(a,u) \rightarrow \Real{} $ by
		\begin{equation*}
		\overline{Q}_{A}(a,u)(\tau, \tau_{1})= \tau \bullet \ap \Der \bm{\nu}_{A}(x)(\sigma_{1}),
		\end{equation*}
		where $(\tau, \tau_{1}) \in  T_{A}(a,u) \times T_{A}(a,u) $ and $ \sigma_{1} \in \ap \Der \bm{\xi}_{A}(x)^{-1}[\tau_{1}] $. This is  a well-posed definition (cf.\ \cite[4.6, 4.8]{MR4117503}). We call $ \overline{Q}_{A}(a,u) $ \textit{the generalized second fundamental form of $ A $ at $ a $ in the direction $ u $.} If $ A $ is a submanifold of class $ 2 $, then $\overline{Q}_{A}$ is obviously related with its classical second fundamental form $ Q_A $ (see \ref{comparison of curvatures} where a such relation is investigated for arbitrary second-order rectifiable sets). Moreover, if $(a,u) \in R(N(A))$ we define \emph{the generalized principal curvatures of $ A $ at $(a,u) $} to be the numbers 
		\begin{equation*}
		\overline{\kappa}_{A,1}(a,u) \leq \ldots \leq \overline{\kappa}_{A,n}(a,u),
		\end{equation*}
		such that $\overline{\kappa}_{A,m +1}(a,u) = \infty $, $\overline{\kappa}_{A,1}(a,u), \ldots , \overline{\kappa}_{A,m}(a,u)$ are the eigenvalues of $ \overline{Q}_{A}(a,u)$ and $ m = \dim T_{A}(a,u)$. If $ 1 \leq k \leq n $ then the  \emph{generalized $ k $-th mean curvature of $ A $} is defined by
		\begin{equation*}
	\overline{H}_{A,k}= \sum_{1 \leq j_1 < \ldots < j_k \leq n} \; \frac{\prod_{l=1}^{k}\overline{\kappa}_{A, j_l}}{\prod_{i=1}^{n}(1+\overline{\kappa}_{A,i}^2)^{1/2}}
		\end{equation*}
		and 
		\begin{equation*}
		\overline{H}_{A,0}=  \frac{1}{\prod_{i=1}^{n}(1+\overline{\kappa}_{A,i}^2)^{1/2}}.
		\end{equation*}
		
		If $ K \subseteq \mathbf{R}^{n+1} $ is a convex body (i.e.\ a closed convex set with non empty interior) and $ 0 \leq k \leq n $ then we define the $ k $-th curvature measure of $ K $ by 
		\begin{equation}\label{curvature measure}
		C_k(K, B) = \int_{N(K)|B}\overline{H}_{K, n-k}\, d\Haus{n}
		\end{equation}
		whenever $ B \subseteq \mathbf{R}^{n+1} $ is a Borel subset. This a Radon measure over supported in $ \partial K $. We recall that $C_n(K, \cdot) = \Haus{n} \restrict \partial K $. We denote the absolutely continuous part of $ C_{k}(K, \cdot)$ with respect to $ \Haus{n} \restrict \partial K $ by $ C_{k}^{a}(K, \cdot) $ and its singular part by $ C_{k}^{s}(K, \cdot) $. It is known (see \cite{MR1654685}) that 
		\begin{equation*}
		 C_{k}^{s}(K,B) = \int_{\{(x,u): x \in B, \;  \overline{\kappa}_{K,n}(x,u) = \infty\}} \overline{H}_{K, n-k}\, d\Haus{n}
		\end{equation*}
		whenever $ B \subseteq \mathbf{R}^n $ is a Borel set.

		\subsection*{Approximate curvatures of second-order rectifiable sets}
	One feature of the method of this paper is the interplay between the generalized curvatures of a closed set $ A $ introduced in the previous section and the approximate curvature of the positive boundary $ \partial^+ A $. Here we explain how to introduce a concept of approximate curvature on arbitrary countably $ \rect{m} $ rectifiable sets of class $ 2 $. 
	
\begin{Definition}\label{second order rectifiability}
	Let $ m $ be a positive integer. We say that a set $ S \subseteq \Real{n+1} $ is \emph{countably $ \rect{m} $ rectifiable of class $ 2 $} if and only if there exists a countable collection $ F $ of $ m $ dimensional submanifolds of class $ 2 $ such that 
	\begin{equation*}
		\Haus{m}\big(S \sim \bigcup F\big) =0.
	\end{equation*}
	We say that $ S $ is $ \rect{m} $ rectifiable of class $ 2 $ if $ \Haus{m}(S) < \infty $.
\end{Definition}

The concept of approximate curvature, that we are going to introduce, is based on the following standard Lemma, whose proof can be easily inferred employing the concept of approximate tangent cone of a set (see \cite[3.2.16]{MR0257325}) and the notion of approximate differentiability of second order for functions (see \cite{MR3978264}). 

	\begin{Lemma}\label{agreement of approximate curvatures}
	If $ M $ and $ N $ are $ m $ dimensional submanifolds of class $ 1 $ [class $2 $] then 
	\begin{equation*}
	\Tan(M,x) = \Tan(N,x) \qquad [Q_M(x) = Q_N(x)]
	\end{equation*}
	for $ \Haus{m} $ a.e.\ $ x \in M \cap N $.
\end{Lemma}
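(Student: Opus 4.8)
\textbf{Proof proposal for Lemma \ref{agreement of approximate curvatures}.}

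The plan is to reduce the statement to a purely measure-theoretic property of the set where two submanifolds of class $1$ (resp. $2$) agree, using the notion of approximate tangent cone and approximate second-order differentiability. First I would observe that it suffices to prove the statement locally: cover $M \cap N$ by countably many relatively open pieces on which both $M$ and $N$ are graphs of $\mathcal{C}^1$ (resp. $\mathcal{C}^2$) functions over a common $m$-plane. Fix $x_0 \in M \cap N$ at which both $M$ and $N$ are graphs of functions $f, g : \Omega \to \Real{n+1-m}$, $\Omega \subseteq \Real{m}$ open, over the $m$-plane $\Real{m} \times \{0\}$, so that near $x_0$ the set $M \cap N$ corresponds (via the graph map, which is a bi-Lipschitz homeomorphism) to the coincidence set $E = \Omega \cap \{ y : f(y) = g(y)\}$. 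Since the graph map has positive Jacobian bounded away from $0$, proving the claim for $\Haus{m}$-a.e.\ point of $M \cap N$ in this chart is equivalent to proving that for $\Leb{m}$-a.e.\ $y \in E$ one has $Df(y) = Dg(y)$ in the class $1$ case, and additionally $D^2 f(y) = D^2 g(y)$ in the class $2$ case; indeed $\Tan(M,x)$ and $Q_M(x)$ are determined, through explicit smooth formulas, by $Df$ and $D^2f$ at the corresponding point.

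The core fact is then the following: if $f, g$ are $\mathcal{C}^1$ on $\Omega$ and $E = \{f = g\}$, then $Df = Dg$ $\Leb{m}$-a.e.\ on $E$; and if they are $\mathcal{C}^2$, then also $D^2 f = D^2 g$ $\Leb{m}$-a.e.\ on $E$. For the first order statement one applies the standard result that a Lipschitz (here $\mathcal{C}^1$) function has zero differential $\Leb{m}$-a.e.\ on any level set of it, applied componentwise to $h = f - g$: at $\Leb{m}$-a.e.\ point $y$ of $E = \{h = 0\}$ that is a point of density one of $E$, the difference quotients of $h$ along directions into $E$ vanish, and since $Dh(y)$ exists it must annihilate $\Tan^m(\Leb{m}\restrict E, y) = \Real{m}$ (the approximate tangent cone is the full space at density points), hence $Dh(y) = 0$. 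For the second order statement, one works at a point $y \in E$ that is simultaneously a point of density one of $E$ and a point where $h$ is approximately differentiable of second order with $D^2 h(y)$ equal to the classical Hessian (this holds $\Leb{m}$-a.e.\ since $h \in \mathcal{C}^2$); restricting $D^2 h(y)$ — a quadratic form — to the approximate tangent cone of $E$ at $y$, which is again all of $\Real{m}$, and using that $Dh \equiv 0$ on $E$ (already established, a.e., but in fact $Dh = 0$ identically on $E$ in the $\mathcal{C}^1$ reduction is not available, so one argues instead that the second-order Taylor expansion of $h$ along $E$ at a density point forces the quadratic term to vanish on $\Tan^m(\Leb{m}\restrict E, y)$), we conclude $D^2 h(y) = 0$. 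Assembling the chart-wise conclusions over a countable cover yields the claim for $\Haus{m}$-a.e.\ $x \in M \cap N$.

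The main obstacle I anticipate is the careful handling of the second-order statement: unlike the first-order case, one cannot simply say "$Dh = 0$ on $E$" and differentiate, because $E$ need not be a manifold or even have any smooth structure — it is merely a set of positive measure where a pointwise Hessian exists a.e. The right tool is the approximate second-order differentiability (in the sense of \cite{MR3978264}): at $\Leb{m}$-a.e.\ $y \in E$, $h$ admits a second-order Taylor expansion with $\Leb{m}$-density one, i.e.\ $h(z) = h(y) + Dh(y)(z-y) + \tfrac12 D^2h(y)(z-y, z-y) + o(|z-y|^2)$ off a set of density zero at $y$; restricting this expansion to $z \in E$ (where $h \equiv 0$) and to a density point $y$ of $E$, and exploiting that the approximate tangent cone $\Tan^m(\Leb{m}\restrict E, y)$ equals $\Real{m}$, one forces first $Dh(y) = 0$ and then $D^2h(y) = 0$ as a symmetric bilinear form on $\Real{m}$. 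The verification that these pointwise Taylor expansions are available $\Leb{m}$-a.e.\ for $\mathcal{C}^2$ functions (trivially true, as $\mathcal{C}^2 \subseteq$ twice approximately differentiable everywhere) and that the approximate tangent cone is full-dimensional at density points is standard, but the bookkeeping connecting $Df, D^2f$ in a graph chart back to $\Tan(M,x)$ and $Q_M(x)$ via the chain rule for the second fundamental form is the most tedious piece and the one I would present with care.
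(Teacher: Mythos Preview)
Your proposal is correct and follows essentially the same route as the paper: both arguments rest on the fact (which the paper cites as \cite[2.10.19(4)]{MR0257325}) that $\Haus{m}$-a.e.\ point of $M \cap N$ is a point of full $\Haus{m}$-density of $M \cap N$ inside each of $M$ and $N$, and then observe that at any such point the approximate tangent cone and the (approximate) second-order Taylor data of the two manifolds are forced to coincide---exactly the mechanism you describe via graph charts and the approximate second-order differentiability of \cite{MR3978264}. The paper compresses all of this into a two-line sketch, whereas you have written out the chart-level bookkeeping; the only small point you leave implicit is the existence of a \emph{common} graphing $m$-plane at each $x_0 \in M \cap N$, which is immediate since the set of $T \in \mathbf{G}(n+1,m)$ with $T^\perp \cap \Tan(M,x_0) = \{0\} = T^\perp \cap \Tan(N,x_0)$ is generic.
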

\begin{proof}
It follows by \cite[2.10.19(4)]{MR0257325} that  
	\begin{equation*}
	\lim_{r \to 0}\frac{\Haus{m}((M \sim N) \cap \mathbf{B}(x,r))}{r^m} =0 \quad \textrm{for $ \Haus{m} $ a.e.\ $ x \in M \cap N $}.
	\end{equation*}
Then one observes that the conclusion of the Lemma is valid at each $ x \in M \cap N $ where a such density condition holds.
\end{proof}

\begin{Definition}
Let $ S \subseteq \Real{n+1} $ be an $ \Haus{m} $ measurable and countably $ \rect{m} $ rectifiable set of class $ 1 $. An $ \Haus{m} \restrict S $-measurable function $ \tau $ with values in $ \mathbf{G}(n+1,m) $ is an \emph{approximate tangent space of $ S $} if and only if for every $ m $ dimensional submanifold $ M $ of class $ 1 $, 
\begin{equation*}
	\tau(x) = \Tan(M,x) \quad \textrm{for $ \Haus{m} $ a.e.\ $ x \in M \cap S $.}
\end{equation*}
\end{Definition}
		\begin{Definition}
			Let $ S \subseteq \Real{n+1} $ be a set that is $ \Haus{m} $ measurable and countably $ \rect{m} $ rectifiable of class $ 2 $ and let $ \tau $ be an approximate tangent space of $ S $. We say that a function $ Q $ mapping $ \Haus{m} $ almost every $ a \in S $ into a symmetric bilinear form 
			\begin{equation*}
			Q(a) : \tau(a) \times \tau(a) \rightarrow \tau(a)^\perp
			\end{equation*}
			is an \emph{approximate second fundamental form of $ S $} if and only if the following two conditions are satisfied: 
			\begin{enumerate}
			\item the function mapping $ a \in S $ into $ Q(a) \circ \bigodot_{2}\tau(a)_\natural $ is $ \Haus{m}\restrict S $ measurable with values in $ \bigodot^2(\mathbf{R}^n, \mathbf{R}^n) $, 
			\item if $ M $ is an $ m $-dimensional submanifold of class $ 2 $ then
			\begin{equation*}
			Q(a) = Q_M(a) \quad \textrm{for $ \Haus{m} $ a.e.\ $ a \in M \cap S $.}
			\end{equation*}
			\end{enumerate}
		An \emph{approximate mean curvature vector of $ S $} is the trace of an approximate second fundamental form of $ S $.
		\end{Definition}
	
	\begin{Lemma}
	Suppose $ S \subseteq \Real{n+1} $ is $ \Haus{m} $ measurable and countably $ \rect{m} $ rectifiable [of class $ 2 $]. 
	
	Then there exists an approximate tangent space of $ S $ [an approximate second fundamental form of $ S $] and it is $ \Haus{m}\restrict S $ almost unique.
	\end{Lemma}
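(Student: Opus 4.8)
The plan is a routine ``disjointify the given covering, glue the local data, and check the outcome against the definitions'' argument, and strictly speaking there is no serious obstacle beyond a measurability verification. First I would fix a good covering: since $ S $ is $ \Haus{m} $ measurable and countably $ \rect{m} $ rectifiable it is, up to an $ \Haus{m} $-null set, covered by a countable family $ \{ M_j \}_{j \geq 1} $ of $ m $-dimensional submanifolds of class $ 1 $ (a standard property of rectifiable sets; cf.\ \cite{MR0257325}), which may moreover be taken to be Borel sets, and which in the bracketed case may be taken of class $ 2 $ by Definition \ref{second order rectifiability}. Put $ S_0 = S \sim \bigcup_j M_j $ and $ S_j = (S \cap M_j) \sim \bigcup_{i < j} M_i $ for $ j \geq 1 $, so that $ S $ is the disjoint union of the $ \Haus{m} $ measurable sets $ S_0, S_1, S_2, \ldots $, with $ \Haus{m}(S_0) = 0 $.

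Next I would define the candidate objects by $ \tau(x) = \Tan(M_j, x) $ and, in the class $ 2 $ case, $ Q(x) = Q_{M_j}(x) $ for $ x \in S_j $ and $ j \geq 1 $, extending $ \tau $ and $ Q $ arbitrarily over the null set $ S_0 $; here $ Q(x) $ maps $ \tau(x) \times \tau(x) $ into $ \tau(x)^{\perp} $ automatically, since $ Q_{M_j}(x) $ maps $ \Tan(M_j, x) \times \Tan(M_j, x) $ into $ \Nor(M_j, x) $. Measurability of $ \tau $ holds because $ x \mapsto \Tan(M_j, x) $ is continuous on the Borel set $ M_j $, hence $ \Haus{m}\restrict S_j $ measurable, and $ \tau $ is patched from countably many such pieces together with a null set; the same argument applied to the continuous map $ x \mapsto Q_{M_j}(x) \circ \bigodot_{2}\Tan(M_j, x)_{\natural} $ on $ M_j $ yields condition (1) of the definition of approximate second fundamental form. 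This establishes existence of the relevant objects modulo the defining property.

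That defining property is exactly what Lemma \ref{agreement of approximate curvatures} provides. Given an arbitrary $ m $-dimensional submanifold $ M $ of class $ 1 $ [of class $ 2 $], I would write $ M \cap S = (M \cap S_0) \cup \bigcup_j (M \cap S_j) $, observe that $ \Haus{m}(M \cap S_0) = 0 $ and $ M \cap S_j \subseteq M \cap M_j $, and apply Lemma \ref{agreement of approximate curvatures} to each pair $ (M, M_j) $ to obtain $ \Tan(M, x) = \Tan(M_j, x) = \tau(x) $ [and $ Q_M(x) = Q_{M_j}(x) = Q(x) $] for $ \Haus{m} $ almost every $ x \in M \cap M_j $, hence for $ \Haus{m} $ almost every $ x \in M \cap S_j $; since a countable union of $ \Haus{m} $-null sets is $ \Haus{m} $-null, the property follows for $ \Haus{m} $ almost every $ x \in M \cap S $. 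Uniqueness is then immediate: for two approximate tangent spaces $ \tau, \tau' $, applying the defining property of each with $ M = M_j $ gives $ \tau = \Tan(M_j, \cdot) = \tau' $ $ \Haus{m} $ almost everywhere on $ M_j \cap S $, and $ \Haus{m}(S \sim \bigcup_j M_j) = 0 $ then forces $ \tau = \tau' $ $ \Haus{m} $ almost everywhere on $ S $; the identical argument with condition (2) handles two approximate second fundamental forms, which by the previous sentence are attached to $ \Haus{m} $ almost everywhere equal approximate tangent spaces. The only genuine (and minor) obstacle throughout is the measurability verification.
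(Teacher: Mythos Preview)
Your proof is correct and follows essentially the same approach as the paper: disjointify a countable covering by $C^{1}$ (resp.\ $C^{2}$) submanifolds, define $\tau$ and $Q$ piecewise from the submanifolds, and invoke Lemma~\ref{agreement of approximate curvatures} to verify the defining property. The paper is terser---it omits the measurability check and simply declares uniqueness ``evident from the definition''---whereas you spell out both points; your uniqueness argument (apply the definition with $M=M_j$ and exhaust $\Haus{m}$ almost all of $S$) is exactly the unpacking of what the paper leaves implicit.
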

	
\begin{proof}
The asserted uniqueness is evident from the definition. Moreover, one can prove the existence as follows. Choose a countable and disjointed family $ \{R_i:i\geq 1\} $ of $ \Haus{m} $ measurable subsets of $ S $, whose union covers $ \Haus{m} $ almost all of $ S $, such that for every $ i \geq 1 $ there exists an $ m $ dimensional submanifold $M_i $ of class $ 1 $ [of class $ 2 $] containing $ R_i $. Define $\tau(x) = \Tan(M_i,x)$ [$Q(x) = Q_{M_i}(x)$] for $ x \in R_i $ and apply \ref{agreement of approximate curvatures} to check that $ \tau $ [that $Q$] defines an approximate tangent space of $ S $ [an approximate second fundamental form of $ S $].
\end{proof}

\begin{Remark}\label{approx mean curvature}
	The following fact will be useful: \emph{if $ S $ is countably $ \rect{m} $ rectifiable and $ \tau $ is an approximate tangent space of $ S $ then
	\begin{equation*}
	\tau(x) \subseteq \Tan(S,x)
	\end{equation*}
for $ \Haus{m} $ a.e.\ $ x \in S $.} To prove the statement one simply observes that if $ M $ is an $ m $ dimensional submanifold of class $ 1 $ then 
\begin{equation*}
	\Tan(M,x) = \Tan^m[\Haus{m}\restrict (S \cap M),x] \subseteq \Tan(S,x)
\end{equation*}
for $ \Haus{m} $ a.e.\ $ x \in S \cap M $.
\end{Remark}

The following theorem naturally relates the trace of the second fundamental form defined in the previous section with the approximate mean curvature vector of a second order rectifiable set. 

\begin{Theorem}\label{comparison of curvatures}
	Suppose $ A \subseteq \Real{n+1} $ is a closed set, $ m $ is a positive integer, $ S \subseteq A $ is $ \Haus{m} $ measurable and countably $ \rect{m} $ rectifiable of class $ 2 $, $ \tau $ is an  approximate tangent space of $ S $ and $ Q $ is an  approximate second fundamental form of $ S $. 
	
	Then there exists $ T \subseteq S $ with $ \Haus{m}(S \sim T) = 0 $ such that
	\begin{equation*}
	\tau(a) = T_{A}(a,u) \quad \textrm{and} \quad Q(a) \bullet u = - \overline{Q}_{A}(a,u)
	\end{equation*}
	 for $ \Haus{n} $ a.e.\ $ (a,u) \in N(A)|T $.
\end{Theorem}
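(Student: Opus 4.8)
The plan is to localise the statement to a single submanifold of class $2$ carrying a portion of $S$, to translate the assertion into a first–order comparison between $\bm{\xi}_A$ and the nearest point projection of that submanifold, and then to reduce the whole difficulty to the coincidence of the two distance functions on a tubular region. First I would use \ref{agreement of approximate curvatures}, the uniqueness of $\tau$ and $Q$, and a standard decomposition to reduce to the case in which $S$ is contained in a single $m$ dimensional submanifold $M$ of class $2$, with $\tau(a)=\Tan(M,a)$ and $Q(a)=Q_M(a)$ for every $a\in S$: writing $S$ up to an $\Haus{m}$ null set as a countable disjoint union $\bigcup_i R_i$ with $R_i$ contained in such an $M_i$, one proves the statement for each pair $(R_i,M_i)$, obtaining $T_i\subseteq R_i$, and sets $T=\bigcup_i T_i$. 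Since $a\in\partial^{+}A$ whenever $(a,u)\in N(A)$, one has $N(A)|S=N(A)|(S\cap\partial^{+}A)$, so the points of $S\sim\partial^{+}A$ may be placed in $T$ at no cost, and I may assume $S\subseteq M\cap\partial^{+}A$. Finally I would discard a further $\Haus{m}$ null subset of $S$ so that each remaining $a$ is a point of $\Haus{m}$ density $1$ of $S$ relative to $M$ with $\Tan^{m}(\Haus{m}\restrict S,a)=\Tan(M,a)$, and I would restrict attention to the $\Haus{n}$ full subset $R(N(A))|S$ of $N(A)|S$ (see \cite[4.5]{MR4117503}).

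\textbf{Reformulation via the nearest point projection.} Fix $(a,u)\in R(N(A))|S$, choose $x\in R(A)$ with $\bm{\psi}_A(x)=(a,u)$ and set $s=\bm{\delta}_A(x)$, so that $x=a+su$, $\bm{\nu}_A(x)=u$ and $\mathbf{U}(x,s)\cap A=\varnothing$. Since $S\subseteq A$ and $S$ accumulates at $a$ along every direction of $\Tan(M,a)$, a first order expansion of $|x-b|^{2}$ along $b\in S$ near $a$ gives $u\bullet v=0$ for all $v\in\Tan(M,a)$; hence $u\in\Nor(M,a)$. Differentiating the identity $\bm{\xi}_A(y)+\bm{\delta}_A(y)\bm{\nu}_A(y)=y$ approximately at $x$ and using $\ap\Der\bm{\delta}_A(x)=\bm{\nu}_A(x)\bullet(\cdot)$ yields $\ap\Der\bm{\xi}_A(x)=\mathrm{id}-(u\bullet\cdot)u-s\,\ap\Der\bm{\nu}_A(x)$; in particular both approximate differentials kill $u$ and restrict to symmetric endomorphisms of $u^{\perp}$, $T_A(a,u)$ is the image of $\ap\Der\bm{\xi}_A(x)|_{u^{\perp}}$, and by \cite[4.6--4.8]{MR4117503} the form $\overline{Q}_A(a,u)$ is completely determined by $\ap\Der\bm{\xi}_A(x)$ and $s$. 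Because $M$ is of class $2$ it has a tubular neighbourhood of radius $\rho_{0}>0$ around $a$ on which $\bm{\xi}_M,\bm{\nu}_M$ are of class $1$ and satisfy the same identity; replacing $x$ by $a+s'u$ with $0<s'<\min\{\rho_{0},s\,\rho(A,x)\}$, still in $R(A)$ by \cite[3.14]{MR4117503}, I may assume in addition $s<\rho_{0}$ and $\bm{\xi}_M(x)=a$. The classical computation of the curvatures of a tube gives $\Der\bm{\xi}_M(x)[u^{\perp}]=\Tan(M,a)$ and $\Der\bm{\xi}_M(x)|_{\Tan(M,a)}=(\mathrm{id}+s(-Q_M(a)\bullet u))^{-1}$, so the theorem will follow once I show that
\begin{equation*}
\ap\Der\bm{\xi}_A(x)=\Der\bm{\xi}_M(x)\qquad\text{for }\Haus{n}\text{ a.e.\ }(a,u)\in N(A)|S;
\end{equation*}
indeed, reading off images and eigenvalues and using the defining formula for $\overline{Q}_A$, this gives $T_A(a,u)=\Tan(M,a)$ and $\overline{Q}_A(a,u)=-Q_M(a)\bullet u=-Q(a)\bullet u$.

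\textbf{Coincidence of the distance functions.} The displayed identity follows from $\bm{\delta}_A=\bm{\delta}_M$ on a neighbourhood $V$ of $x$: on $V$ the function $\bm{\delta}_M$ is of class $1$, hence so is $\bm{\delta}_A$, whence $\bm{\nu}_A=\nabla\bm{\delta}_A=\nabla\bm{\delta}_M=\bm{\nu}_M$ and $\bm{\xi}_A=\bm{\xi}_M$ on $V$. One inclusion is soft: since $S$ has $\Haus{m}$ density $1$ in $M$ at $a$, $\bm{\xi}_M(y)\in S\subseteq A$ for $\Leb{n+1}$ a.e.\ $y$ near $x$, so $\bm{\delta}_A(y)\le\bm{\delta}_S(y)=\bm{\delta}_M(y)$ there, with equality at $x$. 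For the reverse I would first prove that $\dim T_A(a,u)=m$ for $\Haus{n}$ a.e.\ $(a,u)\in N(A)|S$: the subset of $N(A)$ on which $\dim T_A$ equals $j$ fibres over a portion of $\partial^{+}A$ of dimension at most $j$ with fibres of dimension at most $n-j$, so it meets $N(A)|S$ in an $\Haus{n}$ null set when $j>m$ because $\Haus{j}(S)=0$; and at $\Haus{m}$ a.e.\ $a\in S$ one has $\Tan(M,a)\subseteq T_A(a,u)$ (since $A\supseteq S$ carries an $m$ dimensional set of density $1$ in $M$ at $a$), so $\dim T_A(a,u)\ge m$ there. On the resulting $\Haus{n}$ full set $\ap\Der\bm{\xi}_A(x)|_{u^{\perp}}$ has rank exactly $m$ with image $\Tan(M,a)$, so $\bm{\xi}_A$ behaves near $x$ like a retraction onto an $m$ disk of $M$; combined with $\bm{\delta}_A\le\bm{\delta}_M$ touching with equality at $x$, this excludes the existence of a point of $A$ strictly closer to a point $y$ near $x$ than $M$ is, giving $\bm{\delta}_A\ge\bm{\delta}_M$ near $x$.

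\textbf{Main obstacle.} The crux is precisely this last implication: upgrading the one–sided barrier estimate $\bm{\delta}_A\le\bm{\delta}_M$ to the equality $\bm{\delta}_A=\bm{\delta}_M$ near $x$ — equivalently, the coincidence of the osculating paraboloids of $A$ and of $M$ at $a$ in the direction $u$, rather than mere tangency from one side. This is where the fine structure of $R(A)$ and of $\ap\Der\bm{\xi}_A$ from \cite{MR4117503} has to be combined with the density $1$ property of $S$ in $M$ and with the dimensional decomposition of $N(A)$ used above; once this coincidence is secured, the remaining steps are the routine bookkeeping with the differential identity for $\bm{\xi}$ together with the classical formulas for the curvatures of a tube around a submanifold of class $2$.
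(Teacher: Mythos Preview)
Your localisation to a single $C^{2}$ submanifold $M$, with $\tau=\Tan(M,\cdot)$ and $Q=Q_{M}$ on the piece of $S$ it carries, is correct and is exactly what the paper does. After that, however, the paper does not argue from first principles: it invokes \cite[6.1]{MR4117503}, which already furnishes for each $M_{i}$ a set $R_{i}\subseteq A\cap M_{i}$ with $\Haus{m}((A\cap M_{i})\sim R_{i})=0$ such that $T_{A}(a,u)=\Tan(M_{i},a)$ and $\overline{Q}_{A}(a,u)=-Q_{M_{i}}(a)\bullet u$ for $\Haus{n}$ a.e.\ $(a,u)\in N(A)|R_{i}$. The theorem is then the two-line assembly $T=\bigcup_{i}R_{i}\cap S_{i}$.

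Your proposed substitute for \cite[6.1]{MR4117503}, namely the coincidence $\bm{\delta}_{A}=\bm{\delta}_{M}$ on an \emph{open} neighbourhood of $x=a+su$, has a genuine gap: that coincidence can fail at every relevant point even where the theorem holds. Take $n=m=1$, $M$ the $x$-axis, $E\subset\mathbf{R}$ a fat Cantor set, $f\in C^{2}(\mathbf{R})$ with $f\le 0$, $\{f=0\}=E$ and $f''|_{E}=0$, and set $A=\{(t,y):y\le f(t)\}$, $S=E\times\{0\}$. Then $N(A)|S=\{((e,0),(0,1)):e\in E\}$ has positive $\Haus{1}$ measure and, since $\partial A$ is the graph of $f$ with $f''=0$ on $E$, the identities of the theorem hold at every such point. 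Yet for each $e\in E$ and every neighbourhood $V$ of $x=(e,s)$ there are $y=(e',s)\in V$ with $e'\notin E$ (as $E$ has empty interior); for these $\bm{\xi}_{M}(y)=(e',0)\notin A$ and $\bm{\delta}_{A}(y)>s=\bm{\delta}_{M}(y)$, so neither your ``soft'' inequality $\bm{\delta}_{A}\le\bm{\delta}_{M}$ nor the open-set coincidence is available. What \emph{is} true is that $\bm{\xi}_{A}=\bm{\xi}_{M}$ on the set $\{(e',s'):e'\in E\}$, which has $\Leb{2}$-density $1$ at $x$; that suffices for $\ap\Der\bm{\xi}_{A}(x)=\Der\bm{\xi}_{M}(x)$, but it is a different and harder statement than anything your outline establishes. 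Your rank criterion $\dim T_{A}(a,u)=m$ with image $\Tan(M,a)$ does not isolate the good points either: it is satisfied throughout the example above, and it is also satisfied in cases where the conclusion genuinely fails at $(a,u)$ (e.g.\ $A=M\cup\{(t,-t^{2}):t\in\mathbf{R}\}$ at the origin with the downward normal, where $\overline{Q}_{A}=-2$ while $Q_{M}\bullet u=0$). The actual argument behind \cite[6.1]{MR4117503} operates with approximate differentials and density-one comparisons rather than open-set barriers; you have located the crux correctly, but the mechanism you propose does not resolve it.
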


\begin{proof}
Choose a countable collection $\{M_i: i \geq 1\}$ of $ m $ dimensional submanifolds of class $ 2 $ such that $ \Haus{m}(S \sim \bigcup_{i=1}^{\infty}M_i) =0 $. For every $ i \geq 1 $ let $ S_i $ be the set of $ a \in M_i \cap S $ such that
\begin{equation*}
	\tau(a) = \Tan(M_i,a) \quad \textrm{and} \quad Q(a) = Q_{M_i}(a),
\end{equation*}
and notice that $ \Haus{m}[(M_i \cap S) \sim S_i] =0 $. Moreover, employing \cite[6.1]{MR4117503}, for every $ i \geq 1 $ we select $ R_i \subseteq A \cap M_i $ such that $ \Haus{m}[( A \cap M_i) \sim R_i] =0 $, 
\begin{equation*}
\Tan(M_i,a) = T_A(a,u) \quad \textrm{and} \quad  \overline{Q}_A(a,u) = -Q_{M_i}(a) \bullet u
\end{equation*}
for $ \Haus{n} $ a.e.\ $ (a,u) \in N(A)|R_i $. Now the conclusion can be easily checked with
\begin{equation*}
 T = \bigcup_{i=1}^{\infty}R_i \cap S_i.
\end{equation*}
\end{proof}

\begin{Remark}
Notice that we do not claim that $ \Haus{n}(N(A)| (S \sim T))=0 $ and, in fact, we cannot replace $N(A)|T$ with $N(A)|S $ in the conclusion of the theorem, cf.\ \cite[6.3]{MR4117503}.
\end{Remark}

\begin{Remark}
	If $ S $ is a countably $ \rect{m} $ rectifiable set of class $ 2 $ with $ \Haus{m}(S) < \infty $, then there exists a unique $ m $-dimensional paraboloid at $ \Haus{m} $ almost every $ x \in S $, whose graph approximates $ S $ in a measure-theoretic sense,  see \cite[Definition 3.8 and Theorem 1.2]{MR3978264}. The second-order differentials of such paraboloids define an approximate second fundamental form of $ S $.
\end{Remark}

We finally recall the notion of normal point for a convex body and the associated principal curvatures. The refer to \cite[2.5]{MR3155183} for details. Suppose $ K \subseteq \mathbf{R}^{n+1} $ is a convex body, $ x \in \partial K $ and $ \eta \in \mathbf{S}^n $ such that $ N(K,x) = \{-\eta\} $. Let $ T = \Tan(\partial K ,x) $. Then there exists an open neighborhood $ U $ of $ x $ and a unique convex function $ f : T \rightarrow \mathbf{R} $ differentiable at $ T_\natural (x) $ such that $ f \geq 0 $, $ f(T_\natural(x)) =0 $, $ \Der(f(T_\natural(x))) =0 $ and 
\begin{equation*}
U \cap \partial K = U \cap \{  \chi + f(\chi)\eta :  \chi \in T  \}
\end{equation*}
Then we say that $ x $ is a \emph{normal point of $ K $} if and only if $ f $ is pointwise differentiable of order $ 2 $ at $ T_\natural(x) $ and we set 
\begin{equation*}
	\kappa_1(K,x) \leq \ldots \leq \kappa_n(K,x)
\end{equation*}
to be the eigenvalues of $ \Der f(T_\natural(x)) $. Henceforth we define the \emph{pointwise $ k $-th mean curvature of $ K $ at $ x $} as 
		\begin{equation*}
H_k(K,x)= \sum_{1 \leq j_1 < \ldots < j_k \leq n} \; \prod_{l=1}^{k}\kappa_{j_l}(K,x)
\end{equation*}
and 
\begin{equation*}
H_0(K,x) = 1.
\end{equation*}
We recall that (see \cite[Hilfssatz 3.6]{MR522031}) that 
\begin{equation}\label{absolutely continuous part of curvature meas}
C_{k}^{a}(K,B) = \int_{B \cap \partial K} H_{n-k}(K,x)\, d\Haus{n}x \quad \textrm{for every Borel set $ B \subseteq \mathbf{R}^{n+1} $}.
\end{equation}

\begin{Remark}\label{approx sff of a convex body}
Let $ K $ be a convex body of $ \mathbf{R}^{n+1} $. If $ x $ is a normal point of $ K $, $ N(K,x) = \{-\eta\} $, $ f $ the convex function representing $ \partial K $ around $ x $ then we define $ Q(x) : \Tan(\partial K,x) \times \Tan(\partial K,x) \rightarrow \Nor(\partial K,x) $ by $ \Der^2 f(T_\natural(x)) \eta $. One may check that $ \Tan(\partial K,\cdot) $ is an approximate tangent space of $ \partial K $ and $ Q $ is an approximate second fundamental form of $ \partial K $.
\end{Remark}

		
			\section{Totally umbilical $ \mathcal{C}^{1,1} $ hypersurfaces}
		A closed and connected hypersurface of class $ \mathcal{C}^{2} $ which is umbilical at every point must be a plane or a sphere. This result was proved by Hartman in \cite{MR0021395}. A simplified proof of this result appears in \cite{MR2424898}. The same techniques can be easily adapted to cover the case of hypersurfaces of class $ \mathcal{C}^{1,1} $, which is the relevant case for the purpose of the present paper. 
		\begin{Theorem}\label{umbilical surfaces}
			Suppose $ M \subseteq \Real{n+1} $ is a closed and connected $ \mathcal{C}^{1} $ hypersurface, suppose $ \eta : M \rightarrow \mathbf{S}^{n} $ is a Lipschitzian map with $ \eta(x)\in \Nor(M,x) $ for every $ x \in M $ and suppose that for $ \Haus{n} $ a.e.\ $ x \in M $ there exists $ \kappa(x) \in \Real{} $ such that
			\begin{equation}\label{umbilical surfaces: eq1}
			\Der \eta(x)(u) = \kappa(x)u \quad \textrm{ for every $ u \in \Tan(M,x) $.}
			\end{equation}
			
			Then $ M $ is an $ n $ dimensional plane or an $ n $ dimensional round sphere.
		\end{Theorem}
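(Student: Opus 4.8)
\medskip
\noindent\textbf{Proof proposal.}
The plan is to reduce everything to the claim that $\kappa$ is $\Haus{n}$-almost everywhere equal to a single constant $\kappa_0$; the hypothesis $n\ge 2$ will be used only for this claim (for $n=1$ condition \eqref{umbilical surfaces: eq1} is automatically satisfied by every closed $\mathcal{C}^{1,1}$ curve, so the statement is to be understood for $n\ge 2$). Granting the claim, I would conclude as follows. If $\kappa_0=0$ then $\Der\eta=0$ $\Haus{n}$-a.e., and since $\eta$ is Lipschitzian and $M$ is connected, $\eta$ is constant, say $\eta\equiv\eta_0$; then $x\mapsto x\bullet\eta_0$ has vanishing tangential differential on $M$, hence is constant, so $M$ is contained in an affine hyperplane $P$, and being a closed connected $n$-dimensional submanifold contained in $P$ it is relatively open and closed in $P$, so $M=P$. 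If $\kappa_0\ne 0$, I would consider the Lipschitzian map $c(x)=x-\kappa_0^{-1}\eta(x)$; by \eqref{umbilical surfaces: eq1} one has $\Der c(x)(u)=u-\kappa_0^{-1}\kappa(x)u=0$ for $\Haus{n}$-a.e.\ $x$ and every $u\in\Tan(M,x)$, so $c\equiv c_0$ is constant by connectedness, whence $|x-c_0|=|\kappa_0|^{-1}$ on $M$; thus $M$ is a relatively open and closed subset of the round sphere $\partial\mathbf{U}(c_0,|\kappa_0|^{-1})$, and therefore equals it.

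To prove that $\kappa$ is a.e.\ constant — a local statement, by connectedness of $M$ — I would fix $x_0\in M$ and, after a rotation, represent $M$ near $x_0$ as the graph of a function $g\in\mathcal{C}^{1,1}(V)$ over an open set $V\subseteq\Real{n}$ with $\nabla g(\chi_0)=0$; replacing $\eta$ by $-\eta$ on this piece if necessary, $\eta$ agrees there with $N:=W^{-1}(-\nabla g,1)$, where $W:=(1+|\nabla g|^2)^{1/2}$. Parametrising $M$ by $F(\chi)=(\chi,g(\chi))$ and differentiating the identity $\eta\circ F=N$ (the chain rule being valid $\Haus{n}$-a.e.\ since $F$ is bi-Lipschitz), condition \eqref{umbilical surfaces: eq1} becomes $\partial_iN(\chi)=\kappa(\chi)\big(e_i+\partial_ig(\chi)\,e_{n+1}\big)$ for a.e.\ $\chi$ and $i=1,\dots,n$, where $e_1,\dots,e_{n+1}$ is the standard basis and $\kappa(\chi)$ abbreviates $\kappa(F(\chi))$. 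Reading off the first $n$ components, whose values for $N$ are $-\partial_jg/W$, this says precisely that the Lipschitzian map $q:=(\partial_1 g/W,\dots,\partial_n g/W):V\to\Real{n}$ — essentially the horizontal part of the Gauss map read in the chart — satisfies $\Der q=-\kappa\,\mathrm{Id}$ a.e.\ on $V$. This is the low-regularity substitute for the classical Codazzi equation (the standard tool in the $\mathcal{C}^2$, indeed $\mathcal{C}^3$, case, where it immediately yields $d\kappa=0$ once $n\ge 2$).

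It then remains to invoke the elementary measure-theoretic fact — and this is exactly where $n\ge 2$ enters — that a Lipschitzian map $q:V\to\Real{n}$ on a connected open $V\subseteq\Real{n}$ with $n\ge 2$, satisfying $\Der q(\chi)=\mu(\chi)\mathrm{Id}$ a.e.\ for some $\mu\in\Lp{\infty}_{\loc}$, must have $\mu$ a.e.\ constant: on any open box $B\subseteq V$ the relations $\partial_j q^i=0$ for $i\ne j$ force $q^i(\chi)=\phi_i(\chi_i)$ with $\phi_i$ Lipschitz, so that $\mu=\partial_i q^i=\phi_i'(\chi_i)$ a.e.\ for every $i$; hence on $B$ the function $\mu$ depends separately on each single coordinate, and a Fubini argument then forces it to be a.e.\ constant on $B$. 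Overlapping boxes together with connectedness of $V$, and then of $M$, upgrade this to a single global constant, which is $\kappa_0$. The main obstacle, and the only point genuinely requiring care, is this passage from the pointwise umbilicity of a merely $\mathcal{C}^{1,1}$ hypersurface to $\Der q=-\kappa\,\mathrm{Id}$ together with the ensuing slicing argument, i.e.\ carrying out the Codazzi-type computation purely at the level of almost-everywhere derivatives of Lipschitz functions; once this is in place, the remaining steps are formal.
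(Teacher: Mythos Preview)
Your proposal is correct and follows essentially the same approach as the paper: both represent $M$ locally as a $\mathcal{C}^{1,1}$ graph, translate the umbilicity condition into the statement that the Lipschitz map $q=\big((1+|\nabla g|^2)^{-1/2}\partial_1 g,\ldots,(1+|\nabla g|^2)^{-1/2}\partial_n g\big)$ has a.e.\ derivative a scalar multiple of the identity, then use that $\partial_j q^i=0$ for $i\neq j$ forces $q^i$ to depend on $\chi_i$ alone and a Fubini argument makes the common diagonal entry constant, and finally conclude by integrating $\Der\eta=\kappa_0\,\mathrm{Id}$ along $M$. Your discussion of the role of $n\ge 2$ and the closing open-and-closed argument are slightly more explicit than the paper's, but the substance is identical.
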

		
		\begin{proof}
			Claim 1: \emph{$ \kappa $ is ($ \Haus{n} $ almost equal to) a constant function on $ M $.}\\ Since $ M $ is connected, this is equivalent to prove that $ \kappa $ is locally constant around each point of $ M $. Since $ M $ locally corresponds at each point $ a \in M $ to a graph of a $ \mathcal{C}^{1,1} $ function, we exploit \eqref{umbilical surfaces: eq1} to see that it is enough to prove the following claim: \emph{if $ U_{1}, \ldots , U_{n} $ are bounded open intervals of $ \Real{} $, $ U = U_{1} \times \ldots \times U_{n} $ and $ f : U \rightarrow \Real{} $ is a $ \mathcal{C}^{1,1} $-function such that the conditions
				\begin{equation}\label{umbilical surfaces: eq2}
				\partial_{i}\big((1 + |\nabla f|^{2})^{-1/2} \partial_{j}f\big) =0 \quad \textrm{if $ i \neq j $} 
				\end{equation}
				\begin{equation}\label{umbilical surfaces: eq3}
				\partial_{i}\big((1 + |\nabla f|^{2})^{-1/2}   \partial_{i}f\big)=  \partial_{i+1}\big((1 + |\nabla f|^{2})^{-1/2}  \partial_{i+1}f\big) \quad \textrm{for $ i = 1, \ldots , n-1 $}
				\end{equation}
				hold on $ \Leb{n} $ almost all of $ U $, then $\partial_{i}\big((1 + |\nabla f|)^{-1/2}   \partial_{i}f\big)$ is constant on $ U $.} It follows from \eqref{umbilical surfaces: eq2} that for every $ i = 1, \ldots , n $ there exists a Lipschitzian function $ a_{i}: U_{i}\rightarrow \Real{} $ such that
			\begin{equation*}
			\big(	(1 + |\nabla f|)^{-1/2}   \partial_{i}f\big)(x) = a_{i}(x_{i}) \quad \textrm{for $ x \in U $;}
			\end{equation*}
			then we use \eqref{umbilical surfaces: eq3} to conclude that
			\begin{equation*}
			a'_{i}(x_{i}) = a'_{i+1}(x_{i+1}) \quad \textrm{for $ \Leb{n} $ a.e.\ $ x \in U $,}
			\end{equation*}
			whence we deduce that for each $ i $ the function $ a'_{i} $ is constant and the conclusion follows.
			
			It follows from \eqref{umbilical surfaces: eq1} and Claim 1 that there exists $ \lambda \in \Real{} $ such that 
			\begin{equation*}
			\Der \eta(x)(u) = \lambda u
			\end{equation*}
			for every $ u \in \Tan(M,x) $ and for $ \Haus{n} $ a.e.\ $ x \in M $. If $ \lambda =0 $ then $ \eta $ is constant on $ M $ and $ M $ is a plane. If $ \lambda \neq 0 $ then $ \eta - \lambda \bm{1}_{M} $ is constant on $ M $ and $ M $ is a sphere of radius $ 1/|\lambda| $.
		\end{proof}

	\section{Closed sets}	
	Suppose $ C \subseteq \Real{n+1} $ is a closed set. For each $ a \in C $ we define (see \cite[4.1, 4.2]{MR4012808}) the closed convex subset
	\begin{equation*}
	\Dis(C,a) = \{ v : |v| = \bm{\delta}_{C}(a+v)  \}
	\end{equation*}
	and we notice that $ N(C,a) = \{  v/|v|: 0 \neq v \in \Dis(C,a)  \} $. For every integer $ 0 \leq m \leq n+1 $ we define \textit{the $m$-th stratum of $C$} by
	\begin{equation*}
	C^{(m)} = C \cap \{ a : \dim \Dis(C,a) = n+1-m \};
	\end{equation*}
	this is a Borel set which is countably $ m $ rectifiable and countably $ \rect{m} $ rectifiable of class $ 2 $; see \cite[4.12]{MR4012808}. This definition agrees with \cite[5.1]{MR4117503} by \cite[4.4]{MR4012808}. It follows from Coarea formula \cite[3.2.22]{MR0257325} that
	\begin{equation*}
	C^{(m)} = C \cap  \{a : 0 < \Haus{n-m}(N(C,a)) < \infty  \} \quad \textrm{if $ m = 0, \ldots , n $, }
	\end{equation*}
	\begin{equation*}
	C^{(n+1)} = C \cap \{ a : N(C,a) = \varnothing   \}.
	\end{equation*}
	It will be helpful in the sequel to also consider
\begin{equation}\label{C^n_+}
	C^{(n)}_{+} = C \cap \{x : \Haus{0}(N(C,x)) = 1  \}.
	\end{equation}
Notice that $ \partial^{+}C = \bigcup_{m =0}^{n}C^{(m)} $; in particular $ \partial^{+}C $ is countably $ \rect{n} $ rectifiable of class $ 2 $.

The following theorem is the cornerstone of the paper. We remark that we make no assumption on the part of the boundary $ \partial C $ which lies outside $ \partial^{+}C $, and we do not even need to assume that $ \Haus{n}(\partial^{+}C) $ is positive (which, instead, follows as a conclusion of the theorem).
		
		\begin{Theorem}\label{inequality}
		Suppose $ C \subset \Real{n+1} $ is a closed set such that 
		\begin{equation}\label{inequality:2}
			\sum_{i=1}^n \overline{\kappa}_{C,i}(x,u) \leq 0  \quad \textrm{for $ \Haus{n} $ a.e.\ $ (x,u) \in N(C) $}
		\end{equation}
	and $ h $ is an approximate mean curvature vector  of $ \partial^+C $.

Then  
		\begin{equation*}
			\Leb{n+1}(\Real{n+1} \sim C)\leq \frac{n}{n+1}\int_{\partial^{+}C}\frac{1}{|h|} \, d\Haus{n}.
		\end{equation*}
Furthermore, if there exists $ q > 0 $ such $| h(x)| \leq q $ for $ \Haus{n} $ a.e.\ $ x \in \partial^{+}C $ and 
	\begin{equation*}
\Leb{n+1}(\Real{n+1} \sim C)= \frac{n}{n+1}\int_{\partial^{+}C}\frac{1}{|h|} \, d\Haus{n} < \infty,
\end{equation*}
then $ \Real{n+1} \sim C $ is a union of finitely many disjointed open balls of radius not smaller than $ n/q $.
		\end{Theorem}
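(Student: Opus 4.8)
The plan is to parametrise $\Real{n+1} \sim C$ by the normal rays issuing from $N(C)$, to estimate the Jacobian of this parametrisation by a Heintze--Karcher type argument, and then to transport the estimate onto $\partial^+C$ via \ref{comparison of curvatures}. First I would note that \eqref{inequality:2} forces $\dim T_C(a,u) = n$ for $\Haus{n}$ a.e.\ $(a,u) \in N(C)$ --- otherwise $\overline{\kappa}_{C,n}(a,u) = \infty$ on a set of positive measure and the sum in \eqref{inequality:2} equals $+\infty$ there --- hence, by the description of the strata of $C$ (see \cite[4.12]{MR4012808} and \cite{MR4117503}), $\Haus{n}$ a.e.\ $(a,u) \in N(C)$ has $a \in C^{(n)}$; moreover, if $\Haus{n}(C^{(n)} \sim C^{(n)}_+) > 0$ then, applying \eqref{inequality:2} to $u$ and to $-u$ and using $\overline{Q}_C(a,-u) = -\overline{Q}_C(a,u)$, one gets $\sum_i \overline{\kappa}_{C,i}(a,u) = 0$ on $C^{(n)} \sim C^{(n)}_+$, whence $h = 0$ there by \ref{comparison of curvatures}, so the right-hand side of the asserted inequality is infinite and nothing is to prove. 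Thus I may assume that $N(C)$ projects, up to an $\Haus{n}$ null set, onto $C^{(n)}_+$, on which I write $\mathbf{n}(a)$ for the unique unit vector with $(a,\mathbf{n}(a)) \in N(C)$. For $(a,u) \in R(N(C))$ set $r(a,u) = \sup\{ t > 0 : \bm{\delta}_C(a+tu) = t \}$ and let $G(a,u,t) = a + tu$ on $\Sigma = \{(a,u,t) : (a,u) \in R(N(C)),\ 0 < t < r(a,u)\}$. Using the radial stability of $R(C)$ in \cite[3.14]{MR4117503} together with $\Leb{n+1}(\Real{n+1} \sim (C \cup R(C))) = 0$, the map $G|\Sigma$ is injective --- $\bm{\psi}_C$ and $\bm{\delta}_C$ recover $(a,u,t)$ from $a+tu$ --- and covers $\Leb{n+1}$ almost all of $\Real{n+1} \sim C$; and computing the approximate differentials of $\bm{\xi}_C$ and $\bm{\nu}_C$ at regular points, as in the proof of the generalized Steiner formula, its Jacobian equals $\overline{H}_{C,0}(a,u)\prod_{i=1}^n(1 + t\,\overline{\kappa}_{C,i}(a,u)) = \sum_{j=0}^n t^j\,\overline{H}_{C,j}(a,u)$. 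Hence the area formula gives
\begin{equation*}
	\Leb{n+1}(\Real{n+1} \sim C) = \int_{R(N(C))} \overline{H}_{C,0}(a,u)\int_0^{r(a,u)} \prod_{i=1}^n\big(1 + t\,\overline{\kappa}_{C,i}(a,u)\big)\, dt \; d\Haus{n}(a,u).
\end{equation*}

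Next I would carry out the Heintze--Karcher estimate of the inner integral. Fix $(a,u) \in R(N(C))$, write $\kappa_i = \overline{\kappa}_{C,i}(a,u)$ and $H = -\sum_i \kappa_i \geq 0$. On $(0,r(a,u))$ every factor $1 + t\kappa_i$ is positive, so $\phi(t) := \big(\prod_i (1+t\kappa_i)\big)^{1/n}$, being a geometric mean of positive affine functions, is concave, with $\phi(0) = 1$ and $\phi'(0) = -H/n$; therefore $\phi(t) \leq 1 - \tfrac{H}{n}t$ on $[0,r(a,u)]$, which, since $\phi \geq 0$, forces $r(a,u) \leq n/H$ when $H > 0$ and
\begin{equation*}
	\int_0^{r(a,u)} \prod_{i=1}^n(1+t\kappa_i)\, dt = \int_0^{r(a,u)} \phi(t)^n\, dt \leq \int_0^{n/H} \Big(1 - \tfrac{H}{n}t\Big)^n dt = \frac{n}{(n+1)H};
\end{equation*}
the set $\{(a,u) : H(a,u) = 0\}$ is either $\Haus{n}$-null or makes the right-hand side of the theorem infinite. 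Plugging this into the displayed identity (and using $\Haus{n}(N(C) \sim R(N(C))) = 0$) bounds $\Leb{n+1}(\Real{n+1}\sim C)$ by $\frac{n}{n+1}\int_{N(C)} \frac{\overline{H}_{C,0}(a,u)}{-\sum_i \overline{\kappa}_{C,i}(a,u)}\, d\Haus{n}(a,u)$. Finally I would apply the area formula to the graph map $a \mapsto (a,\mathbf{n}(a))$ on $C^{(n)}_+$: by \ref{comparison of curvatures} its differential on the approximate tangent space $T_C(a,\mathbf{n}(a))$ has eigenvalues $\overline{\kappa}_{C,i}(a,\mathbf{n}(a))$, so its Jacobian equals $\prod_i(1+\overline{\kappa}_{C,i}(a,\mathbf{n}(a))^2)^{1/2} = \overline{H}_{C,0}(a,\mathbf{n}(a))^{-1}$, while $h(a)$ is parallel to $\mathbf{n}(a)$ with $|h(a)| = -\sum_i\overline{\kappa}_{C,i}(a,\mathbf{n}(a))$, again by \ref{comparison of curvatures}. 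This turns the last integral into $\int_{\partial^+C}|h|^{-1}\, d\Haus{n}$, which is the asserted inequality.

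For the equality statement, suppose equality holds, $|h| \leq q$ $\Haus{n}$-a.e.\ on $\partial^+C$, and the right-hand side is finite. Equality in the AM--GM step forces $\overline{\kappa}_{C,1}(a,u) = \dots = \overline{\kappa}_{C,n}(a,u) =: \kappa(a,u)$ for $\Haus{n}$ a.e.\ $(a,u) \in N(C)$, and equality in the integral estimate forces $r(a,u) = n/H(a,u) = -1/\kappa(a,u)$; in particular $\kappa(a,u) < 0$ --- a positive measure set with $\kappa = 0$ would make the right-hand side infinite --- and $|h| \leq q$ gives $-\kappa(a,\mathbf{n}(a)) \leq q/n$, i.e.\ $r(a,u) \geq n/q$. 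Since $\Sigma$ then covers $\{0 < \bm{\delta}_C \leq s\}$ for every $s < n/q$, the parametrisation of the first step shows that $s \mapsto \Leb{n+1}(\{0 < \bm{\delta}_C \leq s\})$ is a polynomial on $(0,n/q)$ (namely $\sum_{j=0}^n \frac{s^{j+1}}{j+1}\int_{N(C)}\overline{H}_{C,j}\,d\Haus{n}$), with coefficients finite since $\Haus{n}(\partial^+C) \leq q\int_{\partial^+C}|h|^{-1}\,d\Haus{n} < \infty$ and the $\overline{\kappa}_{C,i}$ are bounded; hence $\reach C \geq n/q$ by Heveling--Hug--Last \cite{MR2036332}. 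Then for $0 < s < n/q$ the level set $S(C,s)$ is a closed $\mathcal{C}^{1,1}$ hypersurface with Lipschitz unit normal, and by the tube relation $\kappa \mapsto \kappa/(1+s\kappa)$ between the principal curvatures of $C$ and those of $S(C,s)$, together with the umbilicity just established, $S(C,s)$ satisfies the hypotheses of \ref{umbilical surfaces} on each connected component, so each component is a plane or a round sphere. Planes cannot occur since $\Leb{n+1}(\Real{n+1}\sim C) < \infty$; and the spherical components have radii $\geq n/q - s$ and are finite in number, again because $\Real{n+1}\sim C$ has finite volume and contains the open ball bounded by each of them. Following these spheres as $s \to 0^+$ along the normal rays, and using the resulting concentric structure of $\bm{\xi}_C$, identifies $\Real{n+1}\sim C$ with a finite disjoint union of open balls of radius $\geq n/q$.

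The principal difficulty, I expect, lies in the first step: showing that $G|\Sigma$ is an $\Leb{n+1}$ a.e.\ bijective parametrisation of $\Real{n+1}\sim C$ with the stated polynomial Jacobian requires a careful use of the regular-point theory of $\bm{\xi}_C$ and of the tangent structure of $N(C)$ from \cite{MR4117503}, together with correct bookkeeping both of the cut value $r(a,u)$ and of the tilt of the $\Haus{n}$ measure on the rectifiable set $N(C)$. Once this generalized integral-geometric identity is available, the Heintze--Karcher estimate and the rigidity discussion follow the classical pattern, the only further inputs being \ref{umbilical surfaces}, \ref{comparison of curvatures}, and the characterisation of sets of positive reach in \cite{MR2036332}.
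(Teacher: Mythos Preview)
Your proposal is correct and follows essentially the same route as the paper: parametrise $\Real{n+1}\sim C$ by normal rays from $N(C)$, bound the Jacobian via AM--GM, transport to $\partial^+C$ by the coarea formula on $N(C)$, and in the equality case deduce umbilicity, positive reach via \cite{MR2036332}, and apply \ref{umbilical surfaces}. Two technical points need tightening: the Lusin property $\Haus{n}(N(C)|S)=0$ for $\Haus{n}$-null $S$ --- derived in the paper from \eqref{inequality:2} via \cite[5.3, 5.6]{MR4117503} --- is what lets \ref{comparison of curvatures} yield conclusions $\Haus{n}$-a.e.\ on all of $N(C)$ rather than only on $N(C)|T$ for some full-measure $T\subseteq\partial^+C$ (and likewise justifies your symmetry step on $C^{(n)}\sim C^{(n)}_+$, which the paper does through \ref{comparison of curvatures} rather than via a direct identity $\overline{Q}_C(a,-u)=-\overline{Q}_C(a,u)$); and \cite[Theorem~3]{MR2036332} requires the \emph{local} Steiner formula, i.e.\ that $\rho\mapsto\int_{\{0<\bm{\delta}_C\leq\rho\}} f(\bm{\psi}_C(y))\,d\Leb{n+1}y$ be polynomial for every compactly supported Borel $f$, which your exact parametrisation readily gives but which you stated only for $f\equiv 1$.
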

	
	\begin{proof}
Assume $ C \neq \Real{n+1}	$. Since $ \overline{\kappa}_{C,n}(x,u) < \infty $ for $ \Haus{n} $ a.e.\ $ (x,u) \in N(C) $ it follows from \cite[5.3]{MR4117503} that $ \Haus{n}(N(C)|C^{(i)}) =0 $ for every $ i = 0, \ldots , n-1 $ and it follows from \cite[5.6]{MR4117503} that 
\begin{equation*}
\Haus{n}(N(C)| S \cap C^{(n)}) =0  \quad \textrm{whenever $\Haus{n}(S) =0 $.}
\end{equation*}
 Henceforth we conclude that
\begin{equation}\label{inequality:1}
\Haus{n}(N(C)|S) =0 \quad \textrm{whenever $\Haus{n}(S) =0 $.}
\end{equation}
It follows from \ref{comparison of curvatures} and \eqref{inequality:1} that 
		\begin{equation}\label{inequality: eq6}
	\trace	Q_{C}(x, \eta) = -h(x) \bullet \eta \quad \textrm{}
		\end{equation}
	for $ \Haus{n} $ a.e.\ $ (x, \eta) \in N(C) $.
		
		\textbf{Claim 1.} \emph{$ \trace Q_{C}(x, \eta) =0 $ for $ \Haus{n} $ a.e.\ $ (x, \eta) \in N(C)|[C^{(n)} \sim C^{(n)}_{+}] $.}\\
Firstly, we notice that if $ x \in C^{(n)} \sim C^{(n)}_{+} $ then there exists $ \eta \in \mathbf{S}^{n} $ such that $ N(C,x) = \{\eta, -\eta\} $ (by the convexity of $ \Dis(C,x) $). Then we define
\begin{equation*}
N_{0} = N(C)|[C^{(n)} \sim C^{(n)}_{+}],
\end{equation*}
\begin{equation*}
N_{1} = N_{0} \cap \{ (x, \eta): \textrm{the equations \eqref{inequality:2} and \eqref{inequality: eq6} hold for $(x, \eta) $}  \},
\end{equation*}
\begin{equation*}
N_{2} = N_{1} \cap \{(x, \eta): (x, -\eta) \notin N_{1}   \}.
\end{equation*}
We notice that $ \Haus{n}(N_0 \sim N_1) =0 $, whence we deduce that $ \Haus{n}(N_2) =0 $. If $(x, \eta) \in N_1 \sim N_2 $ then $(x, -\eta) \in N_1 $ and we get that
		\begin{equation*}
	\trace	Q_C(x, \eta) = -h(x) \bullet \eta = -\trace Q_{C}(x, -\eta),
		\end{equation*}
whence we deduce that $	\trace Q_{C}(x, \eta) =0$. Henceforth Claim 1 is proved.
		
Let $ \tau $ be a generalized approximate tangent space of $ \partial^+C $. As $ \dim \tau(x)^\perp = 1 $ for $ \Haus{n} $ a.e.\ $ x \in \partial^+C $, we employ \ref{approx mean curvature} to conclude that
\begin{equation*}
	N(C,x) \subseteq \Nor(C,x) \subseteq \Nor(\partial^+C,x) \subseteq \tau(x)^\perp
\end{equation*}
for $ \Haus{n} $ a.e.\ $ x \in \partial^+C $ and
		\begin{equation}\label{inequality: eq7}
			\trace	Q_{C}(x, \eta) = -h(x) \bullet \eta = -|h(x)| 
		\end{equation}
for $ \Haus{n} $ a.e.\ $ (x, \eta) \in N(C) $. Evidently, if $ h = 0 $ on a set of positive $ \Haus{n} $ measure then the asserted inequality is trivially satisfied. Henceforth, by Claim 1, we assume 
\begin{equation}\label{inequality: additional hypothesis}
 \Haus{n}[C^{(n)}\sim C^{(n)}_{+}] =0 \quad \textrm{and} \quad  h(x)  \neq  0 \quad \textrm{for $ \Haus{n} $ a.e.\ $ x \in \partial^{+}C $.} 
\end{equation}
		
Let $ \Omega = \Real{n+1}\sim C $. Define $ \eta : C^{(n)}_{+} \rightarrow \mathbf{S}^{n} $ so that $ \{\eta(z)\} = N(C,z) $ for $ z \in C^{(n)}_{+} $, then define 
	\begin{equation*}
Q = C^{(n)}_{+}	 \cap \{z : \textrm{$ h(z) \neq 0 $, $\dim T_{C}(z, \eta(z)) = n  $ and \eqref{inequality: eq7} holds with $(z, \eta(z))$} \}. 
	\end{equation*}

	\textbf{Claim 2.} \emph{If $ y \in \bm{\xi}_{C}^{-1}(Q) \cap \Omega $ then
		\begin{equation*}
		\bm{\nu}_{C}(y) = \eta(\bm{\xi}_{C}(y)) \quad \textrm{and}	\quad 	- \bm{\delta}_{C}(y)^{-1} \leq \kappa_{C,1}(\bm{\psi}_{C}(y)) < 0.
		\end{equation*}}
Since $ Q \subseteq C^{(n)}_{+} $ the first equation is clear and the inequality $- \bm{\delta}_{C}(y)^{-1} \leq \kappa_{C,1}(\bm{\psi}_{C}(y)) $ follows from \cite[4.8]{MR4117503}. Moreover,
	\begin{equation*}
	n \kappa_{C,1}(\bm{\psi}_{C}(y)) \leq \trace Q_{C}(\bm{\psi}_{C}(y))  = -|h(\bm{\xi}_{C}(y))| < 0.
	\end{equation*}
	
	\textbf{Claim 3.} \emph{$ \Leb{n+1}(\Omega \sim \bm{\xi}_{C}^{-1}(Q)) =0 $.}\\
	Firstly, one notices from \ref{comparison of curvatures} and \eqref{inequality:1} that
	\begin{equation*}
	T_{C}(x, \eta) = \tau(x) \in \mathbf{G}(n+1,n) \quad \textrm{for $ \Haus{n} $ a.e.\ $(x, \eta) \in N(C) $}
	\end{equation*}
	and deduces from \eqref{inequality: additional hypothesis} that $\Haus{n}(\partial^{+}C \sim Q)=0 $. Then we use \eqref{inequality:1} to get 
	\begin{equation*}
 \Haus{n}[N(C)|(\partial^{+}C \sim Q )] =0.
	\end{equation*}
	 Since $ \bm{\psi}_{C}\big(S(C,r) \cap U(C) \sim \bm{\xi}_{C}^{-1}(Q) \big) \subseteq N(C)|(\partial^{+}C \sim Q ) $ for every $ r > 0 $, it follows that 
	\begin{equation*}
		\Haus{n}\big[\bm{\psi}_{C}\big(S(C,r) \cap U(C) \sim \bm{\xi}_{C}^{-1}(Q) \big)\big] =0
	\end{equation*}
and, noting	\cite[3.3]{MR4117503}, we conclude that
	\begin{equation*}
	\Haus{n}\big( S(C,r) \cap U(C) \sim \bm{\xi}_{C}^{-1}(Q)) =0 \quad \textrm{for every $ r > 0 $.}
	\end{equation*}
	Since $ \Leb{n+1}( \Real{n+1} \sim (U(C) \cup C)) =0 $ (see \cite[3.2]{MR4117503}), it follows from Coarea formula that
	\begin{equation*}
	\Haus{n}(S(C,r) \sim U(C)  ) =0 \quad \textrm{for $ \Leb{1} $ a.e.\ $ r > 0 $.}
	\end{equation*}
Henceforth, using again Coarea formula we obtain
	\begin{equation*}
	\Leb{n+1}(\Omega \sim \bm{\xi}_{C}^{-1}(Q)) = \int_{0}^{\infty}\Haus{n}(S(C,r) \sim \bm{\xi}_{C}^{-1}(Q)) \, d\Leb{1}r =0,
	\end{equation*}
	which proves Claim 3.
	
Now we define
	\begin{equation*}
	Z = \big((N(C)|Q) \times \Real{}\big) \cap \{ (z, \eta, t) :  0 < t \leq -\kappa_{C, 1}(z, \eta)^{-1} \}
	\end{equation*}
	and 
	\begin{equation*}
	\phi(z, \eta, t) = z + t \eta \quad \textrm{for $(z, \eta, t) \in Z $}.
	\end{equation*}
One notices that the inclusion $ \bm{\xi}_{C}^{-1}(Q) \cap \Omega \subseteq \phi(Z) $ follows from Claim 2. Therefore, by Claim 3,
	\begin{equation*}
	\Leb{n+1}(\Omega \sim \phi(Z)) =0.
	\end{equation*}
Let $ \tau_1 $ and $ \tau_2 $ be the generalized approximate tangent spaces of $ N(C) $ and $ N(C) \times \mathbf{R} $ respectively. Noting that
	\begin{equation*}
\tau_2(z,\eta,t) = \tau_1(z,\eta)\times \Real{}
	\end{equation*}
	for $ \Haus{n+1} $ a.e.\ $(z,\eta,t) \in N(C)\times \Real{} $, we infer from \cite[4.11(1)]{MR4117503} that the $(n+1)$-dimensional approximate jabobian of $ \phi $ is given by
	\begin{equation*}
	\ap J_{n+1}\phi(z, \eta, t) = J(z,\eta)\cdot \prod_{j=1}^{n}|1 + t\kappa_{C, j}(z, \eta)|
	\end{equation*}
	for $ \Haus{n+1} $ a.e.\ $ (z, \eta,t) \in Z $, where 
	\begin{equation*}
	J(z,\eta) =  \prod_{j=1}^{n}\frac{1}{(1 + \kappa_{C, j}(z, \eta)^{2})^{1/2}}.
	\end{equation*}
	Then we apply the generalized area formula \cite[2.91]{MR1857292}, the classical inequality relating the arithmetic and geometric means of positive numbers and \cite[5.4]{MR4117503} to estimate
	\begin{flalign*}
	& \Leb{n+1}(\Omega) \\
	& \quad \leq	\Leb{n+1}( \phi(Z)  ) \\
	&\quad  \leq \int_{\phi(Z)}\Haus{0}(\phi^{-1}(y)) \, d\Leb{n+1}y \\
	& \quad = \int_{Z}\ap J_{n+1}\phi(z, \eta, t) \, d\Haus{n+1}(z, \eta,t)\\
	&\quad  \leq \int_{N(C)|Q}J(z, \eta)\int_{0}^{-\kappa_{C, 1}(z, \eta)^{-1}} \Big(1 + \frac{t}{n}\trace Q_{C}(z, \eta)\Big)^{n}dt\, d\Haus{n}(z, \eta) \\
	& \quad = \int_{Q}\int_{0}^{-\kappa_{C, 1}(z, \eta(z))^{-1}} \Big(1 - \frac{t}{n}|h(z)|\Big)^{n}dt\, d\Haus{n}z \\
	&\quad  \leq \int_{Q}\int_{0}^{n/|h(z)|}\Big(1 - \frac{t}{n}|h(z)|\Big)^{n}dt\, d\Haus{n}z \\
	&\quad  = \int_{\partial^+C}\int_{0}^{ n/|h(z)|}\Big(1 - \frac{t}{n}|h(z)|\Big)^{n}dt\, d\Haus{n}z \\
	&\quad  = \frac{n}{n+1}\int_{\partial^+C}\frac{1}{|h|}\, d\Haus{n}.
	\end{flalign*}
	This proves the first part of the theorem.
	
	We assume now that the equality sign holds in the last chain of estimates and that there exists $ q > 0 $ such that $ |h(x)| \leq q$ for $ \Haus{n} $ a.e.\ $ x \in \partial^+C $. In particular, the following three equations hold:
	\begin{equation}\label{inequality: eq1}
	\Leb{n+1}(\phi(Z) \sim \Omega ) =0,
	\end{equation}
	\begin{equation}\label{inequality: eq2}
	\Haus{0}(\phi^{-1}(y)) = 1 \quad \textrm{for $ \Leb{n+1} $ a.e.\ $ y \in \phi(Z) $,}
	\end{equation}
	\begin{equation}\label{inequality: eq3}
	-\kappa_{C,j}(z, \eta(z))^{-1} = \frac{n}{|h(z)|} \quad \textrm{for $ \Haus{n} $ a.e.\ $ z \in \partial^{+}C $ and $ j = 1, \ldots , n $.}
	\end{equation}
	Our goal is to prove that $ \Omega $ is a finite union of disjointed open balls. This conclusion will be deduced from the following two claims.
	
	\textbf{Claim 4.} \emph{$ \reach C \geq n/q $.}\\ Notice that $ 0 < |h(z)| \leq q $ for $ \Haus{n} $ a.e.\ $ z \in \partial^{+} C $. Let $ 0 < \rho < n/q $ and define
	\begin{equation*}
	Q_{\rho} =Q \cap \{ z : \rho < -\kappa_{C,1}(z, \eta(z))^{-1}  \}.
	\end{equation*}
	Since it follows from \eqref{inequality: eq3} that $ \Haus{n}(Q \sim Q_{\rho}) =0 $, we infer that 
	\begin{equation*}
	\Haus{n}(\partial^{+}C \sim Q_{\rho}) =0 \quad \textrm{and} \quad \Haus{n}[N(C)| (\partial^{+}C \sim Q_{\rho})] =0.
	\end{equation*}
Therefore, repeating verbatim the argument of Claim 3 with $ Q $ replaced by $ Q_{\rho} $, we conclude that
	\begin{equation}\label{inequality: eq4}
	\Leb{n+1}(\Omega \sim \bm{\xi}_{C}^{-1}(Q_{\rho})) =0.
	\end{equation}
	We define
	\begin{equation*}
	C_{\rho} = \{ z : \bm{\delta}_{C}(z) \leq \rho  \} \quad \textrm{and} \quad Z_{\rho} =(N(C)|Q_{\rho}) \times \{ t : 0 < t \leq \rho  \}
	\end{equation*}
	and we notice that
	\begin{equation}\label{inequality: eq5}
	\bm{\xi}_{C}^{-1}(Q_{\rho})\cap \Omega \cap C_{\rho} \subseteq \phi(Z_{\rho}) \subseteq C_{\rho}.
	\end{equation}
	Let $ f : \Real{n+1} \times \mathbf{S}^{n} \rightarrow \Real{} $ be a Borel measurable function with compact support. Then we employ the generalized Area formula \cite[2.91]{MR1857292} and the Coarea formula \cite[5.4]{MR4117503} to compute
	\begin{flalign*}
	&\int_{\Omega \cap C_{\rho}}f(\bm{\psi}_{C}(y))d\Leb{n+1}y \\
	& \quad = \int_{\Omega \cap C_{\rho} \cap \bm{\xi}_{C}^{-1}(Q_{\rho})}f(\bm{\psi}_{C}(y))d\Leb{n+1}y & \textrm{by \eqref{inequality: eq4}} \\
	& \quad = \int_{\Omega \cap C_{\rho} \cap \bm{\xi}_{C}^{-1}(Q_{\rho})}\int_{\phi^{-1}(y)}f \, d\Haus{0}\, d\Leb{n+1}y & \textrm{by \eqref{inequality: eq2}}\\
	& \quad = \int_{\phi(Z_{\rho})}\int_{\phi^{-1}(y)}f \, d\Haus{0}\, d\Leb{n+1}y & \textrm{by \eqref{inequality: eq1}, \eqref{inequality: eq4}, \eqref{inequality: eq5}} \\
	& \quad = \int_{Z_{\rho}} \ap J_{n+1}\phi(z,\eta,t)\, f(z,\eta)\, d\Haus{n+1}(z, \eta, t) &  \\
	& \quad = \int_{Q_{\rho}}f(z, \eta(z))\int_{0}^{\rho} \prod_{j=1}^{n}|1 + t\kappa_{C, j}(z, \eta(z))|\, dt \, d\Haus{n}z &  \\
	& \quad = \int_{Q_{\rho}}f(z, \eta(z))\int_{0}^{\rho}\Big(1- \frac{t}{n}|h(z)|\Big)^{n}\, dt \, d\Haus{n}z & \textrm{by \eqref{inequality: eq3}} \\
	&\quad = \int_{\partial^{+}C}f(z, \eta(z))\int_{0}^{\rho}\Big(1- \frac{t}{n}|h(z)|\Big)^{n}\, dt \, d\Haus{n}z & \\
	& \quad = \sum_{i=1}^{n+1}c_{i}(f)\rho^{i},
	\end{flalign*}
	where, for $ i = 1, \ldots , n+1 $,
	\begin{equation*}
	c_{i}(f) = \Big(-\frac{1}{n}\Big)^{i-1}\frac{n!}{i!(n-i+1)!} \int_{\partial^{+}C}f(z, \eta(z))|h(z)|^{i-1}\, d\Haus{n}z.
	\end{equation*}
	Therefore $ \reach C \geq n /q $ by \cite[Theorem 3]{MR2036332}.
	
	\textbf{Claim 5.} \emph{If $ 0 < r < n/q $ then $S(C,r)$ is a finite union of disjointed spheres.}\\ Since $ \reach C \geq n/q $ it follows from \cite[4.8]{MR0110078} that $ S(C,r) $ is a closed $ \mathcal{C}^{1} $ hypersurface in $ \Real{n+1} $ and $ \nu_{C}|S(C,r) $ is a unit normal Lipschitzian vector field over $ S(C,r) $. We define
	\begin{equation*}
	T = \posBd C \cap \{ z : 0 < |h(z)|\leq q, \; \kappa_{C, j}(z, \eta(z))= -|h(z)|/n \; \textrm{for $ j = 1, \ldots , n $}  \},
	\end{equation*}
	we notice that $ \Haus{n}(\partial^{+} C \sim T) =0 $ by \eqref{inequality: eq3} and we use \eqref{inequality:1} to infer (always arguing as in Claim 3)
	\begin{equation*}
	\Haus{n}(S(C,r) \sim \bm{\xi}_{C}^{-1}(T)) =0.
	\end{equation*}
	Moreover if $ x \in S(C,r)\cap \bm{\xi}_{C}^{-1}(T) $ then, denoting by $ \chi_{1}(x) \leq \ldots \leq \chi_{n}(x) $ the eigenvalues of $ \Der \bm{\nu}_{C}(x)| \Tan(S(C,r),x) $, we employ \cite[4.10]{MR4117503} to conclude
	\begin{equation*}
	\chi_{j}(x) = \frac{  \kappa_{C, j}(\bm{\xi}_{C}(x), \eta(\bm{\xi}_{C}(x)))}{1 + r\kappa_{C, j}(\bm{\xi}_{C}(x), \eta(\bm{\xi}_{C}(x))) } = \frac{|h(\bm{\xi}_{C}(x))|}{r|h(\bm{\xi}_{C}(x))| - n} 
	\end{equation*}
	for $ j = 1, \ldots , n $. Noting that 
	\begin{equation*}
	0 < \frac{|h(\bm{\xi}_{C}(x))|}{n-r|h(\bm{\xi}_{C}(x))|} \leq \frac{q}{n-rq}< \infty,
	\end{equation*}
	we use \ref{umbilical surfaces} to conclude that $ S(C,r) $ is a union of at most countably many spheres with radii not smaller than $ q^{-1}(n-rq) $. Since $ \Leb{n+1}(\Omega) < \infty $, there are only finitely many spheres and Claim 5 is proved.
	
	We are now ready to conclude the proof. We notice from \cite[4.20]{MR0110078} that 
	\begin{equation*}
	\partial C = \posBd C = \{ x : \dim \Nor(C,x) \geq 1     \}.
	\end{equation*}
	Since $ \reach C \geq n/q $ by Claim 4, we deduce that $ \bm{\xi}_{C}(S(C,r)) = \partial C $ for $ 0 < r < n/q $. Since $ \bm{\nu}_{C}|S(C,r) $ is a unit normal vector field over $ S(C,r) $ and
	\begin{equation*}
	\bm{\xi}_{C}(x) = x - r \bm{\nu}_{C}(x) \quad \textrm{for $ x \in S(C,r) $,}
	\end{equation*}
	the conclusion follows from Claim 5.
\end{proof}

\section{Convex bodies}

The following lemma is the key for the main result of this section.
\begin{Lemma}\label{inner Lusin condition}
Suppose $ K \subseteq \Real{n+1} $ is a convex body and $C =\Real{n+1} \sim \interior(K) $. 

Then $ \partial^{+}C = C^{(n)}_{+} $, $ N(C,x)  = - N(K,x) $ for every $ x \in \partial^+C $ and
\begin{equation}\label{inner Lusin condition : eq1}
	\Haus{n}(N(C)|S) =0 \quad \textrm{whenever $ \Haus{n}(S) =0 $.}
\end{equation}
\end{Lemma}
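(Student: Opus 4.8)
The statement has three parts: the identification $\partial^+C = C^{(n)}_+$, the normal cone relation $N(C,x) = -N(K,x)$, and the Lusin-type property \eqref{inner Lusin condition : eq1}. The plan is to treat them in that order, since the third will be deduced from the first two together with known regularity of the boundary of a convex body.

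First I would observe that $C = \Real{n+1} \sim \interior(K)$ is closed and that $\partial C = \partial K$ (since $K$ is a convex body, hence the closure of its interior). For $x \in \partial C = \partial K$, the outward unit normals of $K$ at $x$ are exactly the vectors $-u$ with $u \in N(C,x)$: indeed a vector $v$ with $|v| = 1$ lies in $N(C,x)$ precisely when $\bm{\delta}_C(x + su) = s$ for small $s > 0$, i.e.\ when the segment $x + (0,s]u$ lies in $\Real{n+1}\sim C = \interior K$, which by convexity of $K$ is equivalent to $-u$ being an outward normal of $K$ at $x$ in the sense of the supporting hyperplane; this gives $N(C,x) = -N(K,x)$ for every $x \in \partial^+C$, and more precisely for every $x \in \partial C$. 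For the first assertion, recall that $\Dis(C,x)$ (equivalently $N(C,x)$ as a cone) records the set of directions along which $C$ has an exterior ball; since $K$ is convex, through every boundary point $x$ there passes at least one supporting hyperplane, so $N(C,x) \neq \varnothing$ for all $x \in \partial K$, whence $\partial^+C = \partial C = \partial K$. On the other hand the normal cone $\Nor(K,x)$ of a convex body has dimension at least $1$ at every boundary point, and it is a half-line (i.e.\ $\Haus{0}(N(K,x)) = 1$) at $\Haus{n}$-a.e.\ $x \in \partial K$; but here we actually want it at \emph{every} point of $\partial^+C$. The cleanest route is to invoke the description of the strata in Section~4: $\partial^+C = \bigcup_{m=0}^{n} C^{(m)}$, and to show that $C^{(m)} = \varnothing$ for $m \leq n-1$. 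This is the key structural point: a point $x \in C^{(m)}$ with $m \leq n-1$ would have $\dim\Dis(C,x) = n+1-m \geq 2$, meaning the normal cone $N(C,x)$ spans a subspace of dimension $\geq 2$; but since $C$ is the complement of a convex open set, $\Dis(C,x)$ consists of vectors $v$ with $x+v$ having $x$ as nearest point of $C$, equivalently the open ball $\mathbf{U}(x+v,|v|)$ misses $C$, i.e.\ lies in $\interior K$ — and two such independent directions at the same point $x$ would force $\interior K$ to contain two balls tangent to $\partial K$ at $x$ from genuinely different sides, contradicting that $x \in \partial K$ and $K$ convex (the convex hull of the two balls together with the constraint that $x$ is a boundary point is impossible). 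Hence $\partial^+C = C^{(n)}$, and then $C^{(n)} = C^{(n)}_+$ because $\Dis(C,x)$ is convex of dimension $1$, so $N(C,x)$ is a single point — it cannot be a pair $\{\eta,-\eta\}$, since $\eta$ and $-\eta$ both in $\Dis(C,x)$ up to scaling would again give balls inside $\interior K$ on opposite sides of $x$.

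Next, for \eqref{inner Lusin condition : eq1}, I would use the second-order rectifiability of $\partial K$: the boundary of a convex body is countably $\rect{n}$ rectifiable of class $2$ (Aleksandrov), and under the Gauss map $x \mapsto \eta(x)$ with $N(C,x) = \{\eta(x)\}$ (well-defined on $C^{(n)}_+ = \partial^+C$), the set $N(C)$ is the graph $\{(x,\eta(x)) : x \in \partial^+C\}$. Since $N(C,x)$ is single-valued everywhere on $\partial^+C$, we have $N(C) = \gr(\eta)$ and the projection $(x,u)\mapsto x$ restricted to $N(C)$ is a bijection onto $\partial^+C$; the point is that this projection is Lipschitz on each piece where $\eta$ is Lipschitz, but $\eta$ itself is only Borel, so I instead argue via \cite[5.6]{MR4117503} (already invoked in the proof of Theorem~\ref{inequality} for exactly this purpose): since $\Haus{n}(N(C)|C^{(i)}) = 0$ for $i \leq n-1$ automatically and $\partial^+C = C^{(n)} = C^{(n)}_+$, the reference \cite[5.6]{MR4117503} gives $\Haus{n}(N(C)|(S \cap C^{(n)})) = 0$ whenever $\Haus{n}(S) = 0$, which together with $\partial^+C = C^{(n)}$ is exactly \eqref{inner Lusin condition : eq1} — observing also that $\Haus{n}(N(C)|(\Real{n+1}\sim\partial^+C)) = 0$ trivially since $N(C,x) = \varnothing$ off $\partial^+C$.

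The main obstacle I anticipate is the rigorous argument that $\Dis(C,x)$ has dimension exactly $1$ at every boundary point of $K$ — i.e.\ ruling out $x \in C^{(m)}$ for $m\le n-1$ and ruling out $N(C,x) = \{\eta,-\eta\}$. Both reduce to the geometric fact that $\interior K$, being convex, cannot contain two open balls tangent to $\partial K$ at the same point $x$ along linearly independent inward directions (nor along the two opposite directions $\pm\eta$): in the two-sided case this is immediate because the two balls plus convexity would put $x$ in the interior; in the higher-dimensional case one takes two independent outward normals $u_1, u_2 \in N(C,x)$, notes each gives a supporting hyperplane of $K$ at $x$, and the two distinct supporting hyperplanes are perfectly compatible with convexity — so actually $\dim N(C,x) \geq 2$ is \emph{not} contradictory in general (think of a cone point). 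The correct resolution is that $\dim\Dis(C,x)$ for $C$ the complement of $\interior K$ equals $\dim\Nor(K,x)$ only loosely; what is true and what I will use is the identity (from \cite[4.4]{MR4012808}, already cited to justify that the strata definition agrees with \cite{MR4117503}) relating $\Dis(C,\cdot)$ to the normal cone of $\interior K$, combined with the fact that for a convex body $\Nor(K,x)$ is a \emph{pointed} cone, hence $\dim\Dis(C,x) = n+1 - m$ forces the nearest-point set to $C$ from the exterior to be genuinely $(n+1-m)$-dimensional — and then the argument above shows $m = n$ necessarily because an exterior normal ball to $C$ at $x$ of radius $r$ is an \emph{interior} ball to $K$ at $x$, and a convex body admits at each boundary point at most a $1$-parameter family (in the radial direction) of such, all with the same center direction. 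I will write this out carefully; everything else is bookkeeping with the cited structure theorems.
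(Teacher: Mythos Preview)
Your plan contains a genuine error in the first part. The claim that $\partial^+C = \partial K$ is false: a point $x \in \partial K$ lies in $\partial^+C$ only if there is an open ball $B \subseteq \interior K$ with $x \in \Clos B$, and this fails at any non-smooth boundary point (e.g.\ a vertex of a polytope); Remark~\ref{McMullen theorem} records only that $\Haus{n}(\partial K \sim \partial^+C) = 0$. The source of the confusion is your identification of $N(C,x)$ with minus the normal cone of $K$: a supporting hyperplane to $K$ at $x$ produces an element of $N(K,x)$, not of $N(C,x)$; the latter requires an \emph{inner} tangent ball, which is much stronger. So the sentence ``through every boundary point there passes a supporting hyperplane, so $N(C,x) \neq \varnothing$'' is simply wrong, and the equality $N(C,x) = -N(K,x)$ cannot hold at points where $N(K,x)$ is not a singleton.

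You do eventually land on the correct geometric idea, but the paper states it in one stroke without any discussion of strata or $\Dis(C,\cdot)$: fix $x \in \partial^+C$ and any $\nu \in N(K,x)$; then $K$ lies in the half-space $H = \{x+v : v \bullet \nu \leq 0\}$, so every closed ball $B$ with $x \in B \subseteq K \subseteq H$ is tangent to $\partial H$ at $x$, forcing its outward normal there to equal $\nu$. Hence every $u \in N(C,x)$ satisfies $u = -\nu$; since $x \in \partial^+C$ means $N(C,x) \neq \varnothing$, this forces $N(K,x)$ to be the singleton $\{\nu\}$ and $N(C,x) = \{-\nu\}$. That is the whole argument for $\partial^+C = C^{(n)}_+$ and $N(C,x) = -N(K,x)$.

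For the Lusin property \eqref{inner Lusin condition : eq1} your route via \cite[5.6]{MR4117503} differs from the paper's. The paper gives a direct, self-contained argument: with $\eta(x)$ the unique element of $N(C,x)$, it writes $\partial^+C = \bigcup_{\epsilon>0} B_\epsilon$ where $B_\epsilon = \{x \in \partial^+C : \bm{\delta}_C(x+\epsilon\eta(x)) = \epsilon\}$, and shows that on $B_\epsilon$ one has $x + \epsilon\eta(x) = \bm{\xi}_{K_\epsilon}(x)$ for the convex inner parallel body $K_\epsilon$; since $\bm{\xi}_{K_\epsilon}$ is Lipschitz, $N(C)|B_\epsilon$ is a Lipschitz graph over $B_\epsilon$ and the Lusin property follows. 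Your shortcut through \cite[5.6]{MR4117503} may well work once $\partial^+C \subseteq C^{(n)}$ is established, but note that in the proof of Theorem~\ref{inequality} that reference is invoked only \emph{after} $\overline{\kappa}_{C,n} < \infty$ a.e.\ has been secured; you should verify that \cite[5.6]{MR4117503} applies unconditionally on $C^{(n)}$ rather than relying on a curvature hypothesis you have not checked for this $C$. The paper's inner-parallel-body argument sidesteps this issue entirely.
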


\begin{proof}
Fix $ x \in \partial^{+}C $, $ \nu \in N(K,x) $ and $ H = \{ x + v : v \bullet \nu \leq 0  \} $. It follows that $ K \subseteq H $ by \cite[1.3.2]{MR3155183}. One easily checks that every closed ball $ B $ with $ x \in B \subseteq K $ has to be tangent to $ \partial H $ and consequently the exterior normal of $ B $ at $ x $ is $ \nu $. Therefore $ \partial^+C = C^{(n)}_+ $ and $ N(C,x) = - N(K,x) $.

Define $ \eta : \partial^{+}C \rightarrow \mathbf{S}^{n} $ so that $ \{\eta(x)\} = N(C,x) $ for $ x \in \partial^{+}C $. For $ \epsilon > 0 $ define
\begin{equation*}
B_{\epsilon} = (\partial^{+}C) \cap  \{ x  :  \bm{\delta}_{C}(x + \epsilon \eta(x)) = \epsilon \}
\end{equation*}
\begin{equation*}
K_{\epsilon} = K \cap \{ x : \bm{\delta}_{C}(x) \geq \epsilon  \} = K \cap \{x : \mathbf{B}(x, \epsilon) \subseteq K  \}
\end{equation*}
and notice that $ K_{\epsilon} $ is a convex set (cf.\ \cite{MR3155183}). If $ x \in B_{\epsilon} $ then $ x + \epsilon \eta(x) \in K_{\epsilon} $ and
\begin{equation*}
|x - \bm{\xi}_{K_{\epsilon}}(x)| \geq \bm{\delta}_{C}(\bm{\xi}_{K_{\epsilon}}(x)) \geq \epsilon,
\end{equation*}
whence we readily infer that $ \bm{\xi}_{K_{\epsilon}}(x)=x + \epsilon \eta(x) $. It follows that
\begin{equation*}
N(C)|B_{\epsilon} = \big\{    (x, \epsilon^{-1}(\bm{\xi}_{K_{\epsilon}}(x)-x)): x \in B_{\epsilon}\big\} 
\end{equation*}
and, since $ \bm{\xi}_{K_{\epsilon}} $ is a Lipschitz map, we conclude that 
\begin{equation*}
\Haus{n}(N(C)| (S \cap B_\epsilon)) =0 \quad \textrm{whenever $ S \subseteq \mathbf{R}^{n+1} $ with $ \Haus{n}(S) =0 $.}
\end{equation*}
Finally, noting that $ \bigcup\{B_{\epsilon}: \epsilon > 0   \} = \partial^{+}C $, we obtain the conclusion in \eqref{inner Lusin condition : eq1}.
\end{proof}

\begin{Remark}\label{McMullen theorem}
It is well known that $ \Haus{n}[(\partial K) \sim (\partial^{+}C)] =0 $, cf.\ \cite{MR367810}. 
\end{Remark}

\begin{Theorem}\label{inequality convex sets}
If $ K \subseteq \Real{n+1} $ is a convex body then
\begin{equation*}
\Leb{n+1}(K)\leq \frac{n}{n+1}\int_{\partial K}\frac{1}{H_1(K,x)} \, d\Haus{n}x.
\end{equation*}
If the equality holds and there exists $ q > 0 $ such that $ H_1(K,x)  \leq q $ for $ \Haus{n}$ a.e.\ $ x \in \partial K $, then $ K $ is a ball.
\end{Theorem}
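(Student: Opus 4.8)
The plan is to deduce Theorem~\ref{inequality convex sets} from Theorem~\ref{inequality} applied to the closed set $C = \Real{n+1} \sim \interior(K)$, using Lemma~\ref{inner Lusin condition} as the bridge between the curvature data of $\partial K$ and that of $\partial^+C$. First I would observe that $\Real{n+1} \sim C = \interior(K)$, so $\Leb{n+1}(\Real{n+1}\sim C) = \Leb{n+1}(K)$ since $\Haus{n+1}(\partial K) = 0$. Next I need to verify the hypothesis \eqref{inequality:2} for $C$: at $\Haus{n}$-a.e.\ $(x,u)\in N(C)$ one has $u = \eta(x) = -\nu(x)$ where $\nu(x)$ is the outer unit normal of $K$ at $x$, and by the relation between $\overline{Q}_C$ and the second fundamental form together with Remark~\ref{approx sff of a convex body}, the generalized principal curvatures $\overline{\kappa}_{C,i}(x,u)$ are (up to sign) the eigenvalues $\kappa_i(K,x)$ of the convex representing function, which are all $\geq 0$. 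I must be careful about the sign convention: because the ball rolls freely \emph{inside} $K$ from the complement side, the generalized principal curvatures of $C$ in the direction $u = -\nu$ come out $\leq 0$ (this is consistent with Claim~2 in the proof of Theorem~\ref{inequality}, where $\kappa_{C,1}<0$), so $\sum_i \overline{\kappa}_{C,i}(x,u) \leq 0$ holds, in fact with each summand $\leq 0$.

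The second ingredient is identifying the approximate mean curvature vector $h$ of $\partial^+C$ with the pointwise first mean curvature of $K$. By Lemma~\ref{inner Lusin condition} we have $\partial^+C = C^{(n)}_+$ and $N(C,x) = -N(K,x)$ for every $x\in\partial^+C$, and by Remark~\ref{McMullen theorem} $\Haus{n}[(\partial K)\sim \partial^+C] = 0$, so integrals over $\partial K$ and over $\partial^+C$ agree. Using Remark~\ref{approx sff of a convex body}, the function $Q$ built from $\Der^2 f(T_\natural(x))\eta$ is an approximate second fundamental form of $\partial K$, hence also of $\partial^+C$ (they agree $\Haus{n}$-a.e.); applying Theorem~\ref{comparison of curvatures} together with the Lusin-type property \eqref{inner Lusin condition : eq1} gives, for $\Haus{n}$-a.e.\ $(x,u)\in N(C)$, that $Q(x)\bullet u = -\overline{Q}_C(x,u)$ and $\tau(x) = T_C(x,u)$. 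Taking traces and recalling $u = -\eta_{\partial K}(x)$ yields $|h(x)| = \sum_{i=1}^n \kappa_i(K,x) = H_1(K,x)$ at $\Haus{n}$-a.e.\ $x$. In particular $H_1(K,\cdot) > 0$ a.e.\ exactly where $h \neq 0$, and the inequality $\Leb{n+1}(K) \leq \frac{n}{n+1}\int_{\partial K} H_1(K,x)^{-1}\, d\Haus{n}x$ follows directly from \eqref{HK general intro: eq1} (with the convention $1/0 = \infty$ handled by the a.e.\ positivity on the set where it matters, or trivially if $h$ vanishes on a positive-measure set).

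For the equality case I would feed the hypothesis $H_1(K,x) \leq q$ $\Haus{n}$-a.e.\ into the second part of Theorem~\ref{inequality}: it gives $|h(x)|\leq q$ a.e.\ and, assuming equality, the conclusion that $\Real{n+1}\sim C = \interior(K)$ is a finite union of disjoint open balls of radius $\geq n/q$. Since $K$ is a convex body, $\interior(K)$ is connected, so this union consists of exactly one ball, whence $K = \Clos(\interior K)$ is a closed ball. The main obstacle I anticipate is the careful bookkeeping of sign conventions and the reduction of the curvature identity from $N(C)$ to $\partial K$: one must check that the exceptional $\Haus{n}$-null set on $N(C)$ produced by Theorem~\ref{comparison of curvatures} does not obstruct the pointwise identification $|h| = H_1(K,\cdot)$ on $\partial K$, and this is precisely where \eqref{inner Lusin condition : eq1} and the fact that $N(C,x)$ is a singleton for every $x\in\partial^+C$ (so there is no ambiguity in the normal direction) are used. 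Once that identification is secure, everything else is a direct quotation of Theorem~\ref{inequality} plus the connectedness of $\interior(K)$.
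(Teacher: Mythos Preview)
Your proposal is correct and follows essentially the same approach as the paper: set $C=\Real{n+1}\sim\interior(K)$, use Lemma~\ref{inner Lusin condition} together with Theorem~\ref{comparison of curvatures} and Remark~\ref{approx sff of a convex body} to verify hypothesis~\eqref{inequality:2} and to identify $|h|$ with $H_1(K,\cdot)$, then invoke Theorem~\ref{inequality}. You are in fact more explicit than the paper about two points it leaves implicit: the use of Remark~\ref{McMullen theorem} to pass the integral from $\partial^+C$ to $\partial K$, and the connectedness of $\interior(K)$ in the equality case to reduce the finite union of balls to a single ball.
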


\begin{proof}
Let $ C = \mathbf{R}^{n+1} \sim \interior(K) $ and let $ Q $ be the approximate second fundamental form of $ \partial K $ defined at the normal points of $ \partial K $ (see \ref{approx sff of a convex body}). It follows from \ref{comparison of curvatures} and \ref{inner Lusin condition} that 
\begin{equation*}
T_C(x, \eta) = \Tan(\partial K,x), \qquad 	\overline{Q}_{C}(x,\eta) = -Q(x) \bullet \eta 
\end{equation*}
and
\begin{equation*}
\sum_{i=1}^n \overline{\kappa}_{C,i}(x, \eta) = - \trace(Q(x) \bullet \eta) = - H_1(K,x) \leq 0 
\end{equation*}
for $ \Haus{n} $ a.e.\ $(x, \eta) \in N(C) $. Therefore the conclusion follows from \ref{inequality}.
\end{proof}

\begin{Theorem}\label{final theorem}
Suppose $ k \geq 1 $ and $ K \subseteq \mathbf{R}^{n+1} $ is a convex body such that 
\begin{equation*}
H_k(K, x) \geq  \bigg(\frac{\Haus{n}(\partial K)}{(n+1)\Leb{n+1}(K)}\bigg)^{k}{n \choose k} \quad \textrm{for $ \Haus{n} $ a.e.\ $ x \in \partial K $.}
\end{equation*}

Then $ K $ is a ball. 
\end{Theorem}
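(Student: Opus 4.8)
The plan is to reduce Theorem~\ref{final theorem} to the isoperimetric inequality of Theorem~\ref{inequality convex sets}, using the Maclaurin inequality to convert the hypothesis on $H_k$ into a pointwise lower bound on $H_1$.

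First I would set $c = \Haus{n}(\partial K)/((n+1)\Leb{n+1}(K))$ and note $c \in (0,\infty)$. By Aleksandrov's theorem, $\Haus{n}$-a.e.\ $x \in \partial K$ is a normal point of $K$, and at such $x$ the principal curvatures $\kappa_1(K,x) \le \dots \le \kappa_n(K,x)$ are finite and nonnegative (eigenvalues of the pointwise second differential of a convex representing function). Writing $p_j(x) = H_j(K,x)/\binom{n}{j}$ for the normalized elementary symmetric functions of these numbers, the hypothesis reads $p_k(x) \ge c^k > 0$. Since $p_k(x) > 0$ forces at least $k$ of the $\kappa_i(K,x)$ to be positive, we have $p_1(x), \dots, p_k(x) > 0$; then Newton's inequalities $p_j^2 \ge p_{j-1}p_{j+1}$ (valid for nonnegative reals) telescope to $p_k(x)/p_{k-1}(x) \le \dots \le p_1(x)/p_0(x) = p_1(x)$, giving the Maclaurin bound $p_k(x)^{1/k} \le p_1(x)$. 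Hence $H_1(K,x) = n\,p_1(x) \ge n\,p_k(x)^{1/k} \ge nc$ for $\Haus{n}$-a.e.\ $x \in \partial K$. (For $k = 1$ this is immediate from the hypothesis, with no appeal to Maclaurin.)

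Next I would integrate $1/H_1(K,x) \le 1/(nc) = (n+1)\Leb{n+1}(K)/(n\,\Haus{n}(\partial K))$ over $\partial K$, obtaining
\begin{equation*}
\frac{n}{n+1}\int_{\partial K}\frac{1}{H_1(K,x)}\,d\Haus{n}x \;\le\; \Leb{n+1}(K).
\end{equation*}
Theorem~\ref{inequality convex sets} supplies the reverse inequality, so equality holds throughout; in particular $\int_{\partial K}\big(\tfrac{1}{nc} - \tfrac{1}{H_1(K,x)}\big)\,d\Haus{n}x = 0$ with nonnegative integrand, whence $H_1(K,x) = nc$ for $\Haus{n}$-a.e.\ $x \in \partial K$. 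Thus $H_1(K,\cdot) \le q := nc$ a.e.\ and the equality case of Theorem~\ref{inequality convex sets} is realized, so that theorem forces $K$ to be a ball.

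I expect no genuine obstacle here: the entire substance of the argument is carried by Theorem~\ref{inequality convex sets} (and behind it Theorem~\ref{inequality}), while the remaining step is the classical Maclaurin inequality. The only mildly delicate point is that some $\kappa_i(K,x)$ may vanish, so that the full chain $p_1 \ge p_2^{1/2} \ge \dots \ge p_n^{1/n}$ need not hold in its entirety; but this is harmless, since the hypothesis $p_k > 0$ already guarantees $p_1, \dots, p_k > 0$, which is all the telescoping argument requires.
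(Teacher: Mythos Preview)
Your proposal is correct and follows essentially the same route as the paper: both reduce the hypothesis on $H_k$ to a pointwise lower bound $H_1 \ge nc$ via the Newton--Maclaurin inequality, then combine this with Theorem~\ref{inequality convex sets} to force equality and invoke its rigidity statement. The only cosmetic difference is that the paper argues $H_1 = nc$ a.e.\ by contradiction (exhibiting a set $B_\epsilon$ of positive measure on which $H_1 \ge (1+\epsilon)nc$ and showing this violates the inequality of Theorem~\ref{inequality convex sets}), whereas you obtain it directly from the vanishing of the nonnegative integrand $\tfrac{1}{nc} - \tfrac{1}{H_1}$; these are equivalent formulations of the same step.
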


\begin{proof}
It follows from Newton-McLaurin inequality that 
\begin{equation*}
\frac{1}{n}H_1(K,x) \geq {n \choose k}^{- \frac{1}{k}}H_k(K,x)^{\frac{1}{k}} \geq \frac{\Haus{n}(\partial K)}{(n+1)\Leb{n+1}(K)}
\end{equation*}
for $ \Haus{n} $ a.e.\ $ x \in \partial K $. 

We claim now that 
\begin{equation}\label{final theorem: eq1}
\frac{1}{n}H_1(K,x)  = \frac{\Haus{n}(\partial K)}{(n+1)\Leb{n+1}(K)}	\quad \textrm{for $ \Haus{n} $ a.e.\ $ x \in \partial K $.}
\end{equation}
By contradiction we assume that there exists $ \epsilon > 0 $ such that the set 
\begin{equation*}
B_\epsilon = \partial K \cap \Bigg\{ x : H_1(K,x) \geq (1+ \epsilon)  \frac{n\Haus{n}(\partial K)}{(n+1)\Leb{n+1}(K)}  \Bigg\}
\end{equation*}
has positive $ \Haus{n} $-measure. Then 
\begin{flalign*}
\int_{\partial K}\frac{n}{H_1(K,y)}\, d\Haus{n}y & = \int_{\partial K \sim B_\epsilon}\frac{n}{H_1(K,y)}\, d\Haus{n}y + \int_{B_\epsilon}\frac{n}{H_1(K,y)}\, d\Haus{n}y \\
& \leq \frac{(n+1)\Leb{n+1}(K)}{\Haus{n}(\partial K)}\bigg( \Haus{n}(\partial K \sim B_\epsilon) + \frac{1}{1+\epsilon}\Haus{n}(B_\epsilon)\bigg)\\
& < (n+1)\Leb{n+1}(K),
\end{flalign*}
which contradicts the inequality in \eqref{inequality convex sets}. 

We now readily infer from \eqref{final theorem: eq1} and Theorem \ref{inequality convex sets} that $ K $ is a ball. 
\end{proof}

\begin{Remark}\label{spherical caps}
Theorem \ref{final theorem} is sharp. In fact, if we consider the union of two proper antipodal spherical caps of the unit sphere, we obtain a convex body $ K $ whose pointwise $ k $-th mean curvature is constant and smaller than $\big(\frac{n\Haus{n}(\partial K)}{(n+1)\Leb{n+1}(K)}\big)^{k}{n \choose k}$. We provide the details for completeness. Let $ P \in \mathbf{G}(n+1,n) $ and $ \eta \in P^\perp $ with $ | \eta | = 1 $. For every $ 0 < \epsilon < 1 $ we define 
\begin{equation*}
	\Sigma^+_\epsilon = \mathbf{S}^{n} \cap \{ x : x \bullet \eta \geq \epsilon   \}, \quad 	\Sigma^-_\epsilon = \mathbf{S}^{n} \cap \{ x : x \bullet \eta \leq- \epsilon   \}
\end{equation*}
\begin{equation*}
P^+_\epsilon = \mathbf{B}(0,1) \cap \{ x : x \bullet \eta = \epsilon   \}, \quad 	P^-_\epsilon = \mathbf{B}(0,1) \cap \{ x : x \bullet \eta =- \epsilon   \}
\end{equation*}
and we denote by $ K^+_\epsilon $ and $ K^-_\epsilon $ the convex bodies enclosed by $ \Sigma^+_\epsilon \cup P^+_\epsilon $ and $ \Sigma^-_\epsilon \cup P^-_\epsilon $ respectively. Then we define 
\begin{equation*}
	K_\epsilon = \{ x - \epsilon \eta : x \in K^+_{\epsilon}    \} \cup \{x + \epsilon \eta: x \in K^-_\epsilon \}.
\end{equation*}
Let $ X(x) = x $ for every $ x \in \mathbf{R}^{n+1} $. Since $ K^+_\epsilon $ is a set of finite perimeter, we denote by $ \mathbf{n}(K^+_\epsilon, \cdot) $ the exterior unit normal and we compute
\begin{flalign*}
(n+1)\Leb{n+1}(K^+_\epsilon)  &=  \int_{K^+_\epsilon} \divergence X \, d\Leb{n+1} \\
& = \int_{\Sigma^+_\epsilon \cup P^+_\epsilon} \mathbf{n}(K^+_\epsilon, x) \bullet X(x)\, d\Haus{n}x \\
& = \Haus{n}(\Sigma^+_\epsilon) - \epsilon\Haus{n}(P^+_\epsilon).
\end{flalign*}
We conclude that 
\begin{equation*}
	\frac{\Haus{n}(\partial K_\epsilon)}{(n+1)\Leb{n+1}(K_\epsilon)} = \frac{\Haus{n}(\Sigma^+_\epsilon)}{(n+1)\Leb{n+1}(K^+_\epsilon)} = 1 + \epsilon  \frac{\Haus{n}(P_\epsilon^+)}{(n+1)\Leb{n+1}(K^+_\epsilon)} > 1
\end{equation*}
for every $ 0 < \epsilon < 1 $. Finally we notice that the pointwise $ k $-th mean curvature of $ K_\epsilon $ equals $ {n \choose k} $ at $ \Haus{n} $ almost all points of $ \partial K_\epsilon $.
\end{Remark}

Here we generalize Theorem \ref{final theorem} proving that balls are the unique accumulation points of arbitrary convex bodies that satisfies the geometric condition on the $k$-th mean curvature asymptotically in $ L^1 $; see \eqref{compactness: eq1}.

\begin{Theorem}\label{compactness}
	Suppose $ (K_i)_{i \in \mathbf{N}} $ is a sequence of convex bodies in $ \mathbf{R}^{n+1} $ that converges in Hausdorff distance to a convex body $ K $, $ 1 \leq k \leq n $,
	\begin{equation*}
		\mu_i = \bigg( \frac{ \Haus{n}(\partial K_i)}{(n+1)\Leb{n+1}(K_i)}\bigg)^k {n \choose k}
	\end{equation*}
	 and 
	\begin{equation}\label{compactness: eq1}
	\lim_{i \to \infty} \| H_k(K_i, \cdot)- \mu_i \|_{L^1(\partial K_i)} =0.
	\end{equation}
	
	Then $ K $ is a ball.
\end{Theorem}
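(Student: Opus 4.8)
The plan is to push the pointwise almost‑everywhere lower bound on the $k$-th mean curvature from the approximating bodies $K_i$ to the limit body $K$ --- working not with the (unstable) pointwise curvatures but with the curvature measures, which are weakly continuous under Hausdorff convergence --- and then to invoke Theorem \ref{final theorem}. First I would record the elementary consequences of Hausdorff convergence: since $K$ has nonempty interior, $\Leb{n+1}(K_i) \to \Leb{n+1}(K) > 0$ and $\Haus{n}(\partial K_i) \to \Haus{n}(\partial K)$ (continuity of volume and surface area of convex bodies; see \cite{MR3155183}), so that $\mu_i \to \mu := (\Haus{n}(\partial K)/((n+1)\Leb{n+1}(K)))^{k}{n\choose k}$. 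Moreover, by the weak continuity of curvature measures (\cite{MR3155183}), $C_{n-k}(K_i,\cdot) \to C_{n-k}(K,\cdot)$ and $\Haus{n}\restrict\partial K_i = C_n(K_i,\cdot) \to C_n(K,\cdot) = \Haus{n}\restrict\partial K$ weakly as Radon measures, all supported in a fixed compact set.

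The central step is to control the absolutely continuous parts of the curvature measures. By \eqref{absolutely continuous part of curvature meas} one has $C_{n-k}^{a}(K_i,\cdot) = H_k(K_i,\cdot)\,\Haus{n}\restrict\partial K_i$; I would then observe that for every $\varphi \in C_{c}(\mathbf{R}^{n+1})$, $\int\varphi\,dC_{n-k}^{a}(K_i,\cdot) = \mu_i\int_{\partial K_i}\varphi\,d\Haus{n} + r_i$ with $|r_i| \leq \sup|\varphi|\cdot\|H_k(K_i,\cdot) - \mu_i\|_{L^1(\partial K_i)} \to 0$ by \eqref{compactness: eq1}, so that, together with $\mu_i \to \mu$ and $\Haus{n}\restrict\partial K_i \to \Haus{n}\restrict\partial K$, the measures $C_{n-k}^{a}(K_i,\cdot)$ converge weakly to $\mu\,\Haus{n}\restrict\partial K$. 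Next, using that each singular part $C_{n-k}^{s}(K_i,\cdot) = C_{n-k}(K_i,\cdot) - C_{n-k}^{a}(K_i,\cdot)$ is a nonnegative measure, I would test against nonnegative $\varphi$ and pass to the limit to obtain $\int\varphi\,dC_{n-k}(K,\cdot) \geq \mu\int_{\partial K}\varphi\,d\Haus{n}$, i.e.\ $C_{n-k}(K,\cdot) \geq \mu\,\Haus{n}\restrict\partial K$ as measures. This forces the absolutely continuous part of $C_{n-k}(K,\cdot)$ with respect to $\Haus{n}\restrict\partial K$ to dominate $\mu\,\Haus{n}\restrict\partial K$, hence $H_k(K,x) \geq \mu$ for $\Haus{n}$ a.e.\ $x \in \partial K$ by \eqref{absolutely continuous part of curvature meas} again; since $\mu$ is precisely the geometric constant appearing in Theorem \ref{final theorem}, that theorem would then give that $K$ is a ball.

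The step I expect to be the real crux is exactly this passage to the limit of the curvature lower bound. As emphasized in the introduction, the pointwise curvatures of a convex body can degenerate completely under Hausdorff convergence, so one cannot argue pointwise --- or even in $L^1$ --- for $H_k(K,\cdot)$ itself. The resolution is that the full curvature measures do converge weakly, that hypothesis \eqref{compactness: eq1} forces the absolutely continuous parts to converge to the correct multiple of the surface measure, and that the singular parts, being nonnegative, can only help, so that the inequality survives in the limit. Everything else is routine, together with the classical continuity of volume, surface area and curvature measures under Hausdorff convergence of convex bodies, which I would simply quote from \cite{MR3155183}.
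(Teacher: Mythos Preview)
Your proposal is correct and follows essentially the same route as the paper's proof: use continuity of volume, surface area, and curvature measures under Hausdorff convergence to get $\mu_i\to\mu$ and $C_{n-k}(K_i,\cdot)\rightharpoonup C_{n-k}(K,\cdot)$, use \eqref{compactness: eq1} and \eqref{absolutely continuous part of curvature meas} to show $C_{n-k}^{a}(K_i,\cdot)\rightharpoonup \mu\,\Haus{n}\restrict\partial K$, exploit nonnegativity of the singular parts to deduce $C_{n-k}(K,\cdot)\geq \mu\,\Haus{n}\restrict\partial K$, extract $H_k(K,\cdot)\geq\mu$ a.e.\ from the Lebesgue decomposition, and conclude via Theorem~\ref{final theorem}. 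The only cosmetic difference is that the paper spells out the last extraction by restricting to the $\Haus{n}$-null support of $C_{n-k}^{s}(K,\cdot)$, whereas you phrase it as ``the absolutely continuous part dominates $\mu\,\Haus{n}\restrict\partial K$'', which amounts to the same thing.
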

\begin{proof}
Since $ K_i $ converges in Hausdorff distance to $ K $, we employ \cite[1.8.20, 4.2.1]{MR3155183} to conclude that 
\begin{equation*}
	\lim_{i \to \infty}\Leb{n+1}(K_i) = \Leb{n+1}(K),
\end{equation*}
\begin{equation*}
	 C_m(K_i, \cdot) \overset{w}{\rightharpoonup} C_m(K, \cdot) \quad \textrm{for $ m = 0, \ldots , n $}
\end{equation*} 
and
\begin{equation*}
	\lim_{i \to \infty} \Haus{n}(\partial K_i) = \Haus{n}(\partial K).
\end{equation*}
Let $\mu = \Big( \frac{ \Haus{n}(\partial K)}{(n+1)\Leb{n+1}(K)}\Big)^k {n \choose k} $. It follows that $	\lim_{i \to \infty}\mu_i = \mu $ and we employ \eqref{absolutely continuous part of curvature meas} and \eqref{compactness: eq1} to obtain
\begin{equation*}
C_{n-k}^a(K_i, \cdot) \overset{w}{\rightharpoonup} \mu \Haus{n}\restrict \partial K.
\end{equation*}
Since $ C_{n-k}^a(K_i, \cdot) \leq C_{n-k}(K_i, \cdot) $ for every $ i \geq 1 $, we conclude that 
\begin{equation*}
C_{n-k}(K, \cdot) \geq \mu \Haus{n} \restrict \partial K.
\end{equation*}
Since $ C_{n-k}^s(K, \cdot) = C_{n-k}(K, \cdot)\restrict Z $ for a Borel subset $ Z \subseteq \partial K $ with $ \Haus{n}(Z) =0 $, see \cite[2.9.2]{MR0257325}, and 
\begin{equation*}
C_{n-k}^a(K, \cdot) - \mu \Haus{n}\restrict \partial K \geq - C_{n-k}^s(K, \cdot),
\end{equation*}
we conclude from \eqref{absolutely continuous part of curvature meas} that 
\begin{equation*}
	\int_{S} (H_k(K,x) - \mu)\, d\Haus{n}x \geq 0
\end{equation*}
for every Borel subset $ S \subseteq  \partial K $. Therefore $ H_k(K,x) \geq \mu $ for $ \Haus{n} $ a.e.\ $ x \in \partial K $ and the conclusion follows from \ref{final theorem}.
\end{proof}

We conclude this paper providing a solution of a conjectural statement in \cite[Remark 8]{MR3951441}. Notice that the bound on the measure of the singular set in Theorem \ref{another characterization of the sphere} is sharp, as the example of the two spherical caps in \ref{spherical caps} shows.

\begin{Definition}
Let $ K \subseteq \mathbf{R}^{n+1} $ be a convex body. 

We define $ \Reg(K) $ as the set of points $ x \in \partial K $ with the following property: there exists an open subset $ U $ of $ \mathbf{R}^{n+1} $ containing $ x $ such that $ U \cap \partial K  $ is a $ \mathcal{C}^{1,1} $-hypersuface.

We define $ \Sing(K) = \partial K \sim \Reg(K) $.
\end{Definition}

\begin{Theorem}\label{another characterization of the sphere}
	Suppose $ s \geq 0 $, $ k $ is an integer with $ 1 \leq k \leq n -s $ and $ K $ is a convex body in $ \mathbf{R}^{n+1} $ such that 
	\begin{equation*}
	\Haus{s}(\Sing(K)) =0.
	\end{equation*}
	
	Then 
	\begin{equation*}
C_{n-k}(K, B) = \int_B H_{k}(K,x)\, d\Haus{n}x
	\end{equation*}
for every Borel subset $ B \subseteq \partial K $. Additionally, if there exists $ \lambda \in \mathbf{R} $ such that $ H_k(K, x) = \lambda $ for $ \Haus{n} $ a.e.\ $ x \in \partial K $, then $ K $ is a ball.
\end{Theorem}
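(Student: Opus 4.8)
The plan is to prove the displayed integral identity first, and then extract the rigidity from Schneider's theorem \cite{MR522031}, treating the case $ k=n $ separately.

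\emph{The identity.} Put $ C=\Real{n+1}\sim\interior(K) $ and recall from Section~\ref{section2} that $ C_{n-k}(K,\cdot)=C_{n-k}^{a}(K,\cdot)+C_{n-k}^{s}(K,\cdot) $, where $ C_{n-k}^{a}(K,B)=\int_{B\cap\partial K}H_{k}(K,x)\,d\Haus{n}x $ by \eqref{absolutely continuous part of curvature meas} and $ C_{n-k}^{s}(K,B)=\int_{N(K)|B}\CF{\{\overline{\kappa}_{K,n}=\infty\}}\,\overline{H}_{K,k}\,d\Haus{n} $, so it suffices to prove $ C_{n-k}^{s}(K,\cdot)=0 $. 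Since $ \overline{\kappa}_{K,n}(x,u)<\infty $ exactly when $ \dim T_{K}(x,u)=n $, and (by the convexity of $ K $) this holds for $ \Haus{n} $-a.e.\ $ (x,u)\in N(K)|K^{(n)} $ while $ \overline{\kappa}_{K,n}\equiv\infty $ on $ N(K)|K^{(i)} $ for $ i\le n-1 $, the integrand of $ C_{n-k}^{s} $ is carried — up to $ \Haus{n} $-null sets — by $ N(K)|\bigcup_{i=0}^{n-1}K^{(i)} $; since moreover exactly $ n-i $ of the $ \overline{\kappa}_{K,j} $ are infinite on $ N(K)|K^{(i)} $ and every summand of $ \overline{H}_{K,k} $ is divided by $ \prod_{j=1}^{n}(1+\overline{\kappa}_{K,j}^{2})^{1/2} $, a short inspection shows $ \overline{H}_{K,k}=0 $ on $ N(K)|K^{(i)} $ whenever $ n-i>k $. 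Thus the integrand is carried by $ N(K)|\bigcup_{i=n-k}^{n-1}K^{(i)} $, and $ C_{n-k}^{s}(K,\cdot)=0 $ follows once one proves
\begin{equation}\label{plan:key}
	\Haus{n}\big(N(K)|K^{(i)}\big)=0 \qquad\textrm{for every integer $ i $ with $ n-k\le i\le n-1 $.}
\end{equation}

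\emph{Proof of \eqref{plan:key}.} For $ i\le n-1 $ the normal cone of $ K $ at every point of $ K^{(i)} $ has dimension $ n+1-i\ge 2 $, so $ K^{(i)}\subseteq\Sing(K) $; combined with $ s\le n-k\le i $ this gives $ \Haus{i}(K^{(i)})=0 $. I would now propagate negligibility along the projection $ \pi(x,u)=x $ by a Coarea argument, in the spirit of Claim~3 in the proof of Theorem~\ref{inequality}. The set $ N(K)|K^{(i)} $ is countably $ \rect{n} $ rectifiable, the fibre of $ \pi $ over each point of $ K^{(i)} $ is $ (n-i) $-dimensional, and — using the description of the tangent planes of $ N(K) $ in terms of $ \bm{\xi}_{K} $ and $ \bm{\nu}_{K} $ from \cite{MR4117503} — for $ \Haus{n} $-a.e.\ $ (x,u)\in N(K)|K^{(i)} $ the approximate tangent $ n $-plane $ T $ of $ N(K) $ there meets $ \{0\}\times\Real{n+1} $ in exactly $ n-i $ dimensions. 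Fix a coordinate projection $ p\colon\Real{n+1}\to\Real{i} $: applying the Coarea formula \cite[3.2.22]{MR0257325} to $ p $ on the $ \rect{i} $-rectifiable, $ \Haus{i} $-null set $ K^{(i)} $ gives $ \Haus{0}(K^{(i)}\cap p^{-1}(y))=0 $ for $ \Leb{i} $-a.e.\ $ y $, hence $ (p\circ\pi)^{-1}(y)\cap N(K)|K^{(i)}=\varnothing $ for $ \Leb{i} $-a.e.\ $ y $; applying the Coarea formula a second time to $ p\circ\pi $ on $ N(K)|K^{(i)} $ then forces the $ i $-dimensional Jacobian of $ p\circ\pi $ to vanish $ \Haus{n} $-a.e.\ on $ N(K)|K^{(i)} $. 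Letting $ p $ run over all (finitely many) coordinate projections onto $ \Real{i} $ forces $ \pi|_{T} $ to have rank at most $ i-1 $ a.e., i.e.\ $ \dim\big(T\cap(\{0\}\times\Real{n+1})\big)\ge n-i+1 $ a.e.; this contradicts the preceding dimension count unless $ \Haus{n}(N(K)|K^{(i)})=0 $. This proves \eqref{plan:key}, hence the identity $ C_{n-k}(K,B)=\int_{B}H_{k}(K,x)\,d\Haus{n}x $.

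\emph{The rigidity.} The identity together with $ H_{k}(K,\cdot)=\lambda $ yields $ C_{n-k}(K,\cdot)=\lambda\,\Haus{n}\restrict\partial K=\lambda\,C_{n}(K,\cdot) $. If $ 1\le k\le n-1 $, then $ 1\le n-k\le n-1 $ and Schneider's theorem \cite{MR522031} shows at once that $ K $ is a ball. The remaining case $ k=n $ forces $ s=0 $ (by $ k\le n-s $), so $ \Sing(K)=\varnothing $, i.e.\ $ \partial K $ is of class $ \mathcal{C}^{1,1} $, and $ C_{0}(K,\cdot)=\lambda\,\Haus{n}\restrict\partial K $ means the Gauss--Kronecker curvature of $ K $ equals $ \lambda $ at $ \Haus{n} $-a.e.\ point, with $ \lambda>0 $ since $ C_{0}(K,\partial K)>0 $. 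One then observes that $ K $ has no flat face (one of dimension $ <n $ would put its relative interior into $ \Sing(K) $, and one of dimension $ n $ would carry vanishing Gauss--Kronecker curvature on a set of positive $ \Haus{n} $-measure), hence $ \partial K $ is strictly convex; writing $ \partial K $ locally as the graph of a convex $ \mathcal{C}^{1,1} $ function that solves the Monge--Amp\`ere equation with bounded, positive, continuous right-hand side, Caffarelli's regularity theory upgrades $ \partial K $ to class $ \mathcal{C}^{\infty} $, and Liebmann's classical theorem then gives that $ K $ is a ball.

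The hard part will be \eqref{plan:key} in the borderline regime $ s=n-k $ (an integer): there $ K^{(n-k)} $ is genuinely $ (n-k) $-dimensional and merely $ \Haus{n-k} $-negligible, so a crude Hausdorff-dimension estimate does not suffice, and one must use both that $ K^{(n-k)} $ meets $ \Leb{n-k} $-almost every coordinate plane of codimension $ n-k $ in the empty set and the precise tangential structure of the rectifiable set $ N(K) $ along its strata — which is exactly the regime in which the hypothesis is sharp, as the two-caps example in \ref{spherical caps} shows.
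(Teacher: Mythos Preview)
Your proof of the identity is correct but takes a different route from the paper. The paper never proves \eqref{plan:key}; instead it splits the integral defining $C_{n-k}(K,B)$ over $N(K)|\Reg(K)$ and $N(K)|\Sing(K)$, and kills the singular contribution in one line using the Colesanti--Hug disintegration formula \cite[Theorem 5.5]{MR1742247}:
\[
\int_{N(K)|B\cap\Sing(K)}\overline{H}_{K,k}\,d\Haus{n}=\int_{B\cap\Sing(K)}\Haus{k}(N(K,x))\,d\Haus{n-k}x=0,
\]
the last equality because $\Haus{s}(\Sing(K))=0$ with $s\le n-k$ forces $\Haus{n-k}(\Sing(K))=0$. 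Your argument proves the stronger fact $\Haus{n}(N(K)|K^{(i)})=0$ by a Coarea/rank argument; this works (the tangent-plane dimension count you need is \cite[4.11, 5.2]{MR4117503}), but it is considerably more laborious than invoking the ready-made spherical disintegration, and it forces you to handle each stratum separately.

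On the rigidity, the case split is unnecessary: Schneider's theorem \cite[Satz 1.2]{MR522031} already covers $C_{0}(K,\cdot)=\lambda C_{n}(K,\cdot)$, so the paper simply cites it for all $1\le k\le n$. Your separate treatment of $k=n$ has a gap: you deduce strict convexity of $\partial K$ from ``no flat faces'', but for $n\ge 2$ a $\mathcal{C}^{1,1}$ boundary can contain line segments (which are $\Haus{n}$-null, hence not excluded by $H_{n}=\lambda>0$ a.e.) without containing an $n$-dimensional flat face, so the passage to Caffarelli's interior $C^{2,\alpha}$ estimate is not justified as written. One can repair this---the identity gives $C_{0}(K,\cdot)=\lambda\,\Haus{n}\restrict\partial K$, so in local graph coordinates the Monge--Amp\`ere measure is pinched between positive constants, and Caffarelli's localization lemma then yields strict convexity directly---but it is simpler to observe that Schneider already handles this case.
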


\begin{proof}
	We notice that 
	\begin{flalign*}
		C_{n-k}(K,B) & = \int_{N(K)|B} \overline{H}_{K,k}\, d\Haus{n}\\
		& = \int_{N(K)|B \cap \Reg(K)} \overline{H}_{K,k}\, d\Haus{n} + \int_{N(K)|B \cap \Sing(K)} \overline{H}_{K,k}\, d\Haus{n}.
	\end{flalign*}
	It follows from \cite[Theorem 5.5]{MR1742247} that 
	\begin{equation*}
	\int_{N(K)|B \cap \Sing(K)} \overline{H}_{K,k}\, d\Haus{n} = \int_{B \cap \Sing(K)}\Haus{k}(N(K,x))\, d\Haus{n-k}x =0.
	\end{equation*}
Recalling \ref{approx sff of a convex body}, we apply \ref{comparison of curvatures} with $ A $ and $ S $ replaced by $ K $ and $ \Reg(K) $ respectively, to conclude that there exists $ R \subseteq \Reg(K)  $ with $ \Haus{n}(\Reg(K) \sim R) =0 $ and 
\begin{equation*}
	H_k(K,x) = \overline{H}_{K,k}(x, u) \prod_{i=1}^{n}(1+\overline{\kappa}_{K,i}(x,u)^2)^{1/2}
\end{equation*}
for $ \Haus{n} $ a.e.\ $ (x,u) \in N(K)| R $. Since $ \Reg(K) $ is a $ \mathcal{C}^{1,1} $-hypersurface, we readily infer that 
\begin{equation*}
\Haus{n}(N(K)| (\Reg(K) \sim R)) =0.
\end{equation*}
Therefore, noting that $ \Reg(K)  \subseteq K^{(n)}_+ $ (see \eqref{C^n_+}), we apply \cite[5.4]{MR4117503} to conclude that 
\begin{equation*}
	\int_{N(K)|B \cap \Reg(K)} \overline{H}_{K,k}\, d\Haus{n} = \int_{B} H_k(K,x)\, d\Haus{n}x.
\end{equation*}

If there exists $ \lambda \in \mathbf{R} $ such that $ H_k(K,x) = \lambda $ for $ \Haus{n} $ a.e.\ $ x \in \partial K $ then 
\begin{equation*}
	C_{n-k}(K, B) = \lambda C_n(K, B)
\end{equation*}
for every Borel subset $ B  \subseteq \partial K $ and we conclude that $ K $ is a ball applying \cite[Satz 1.2]{MR522031}.
\end{proof}


\medskip 

\noindent Institut f\"ur Mathematik, Universit\"at Augsburg, \newline Universit\"atsstr.\ 14, 86159, Augsburg, Germany,
\newline mario.santilli@math.uni-augsburg.de
\end{document}